\newtheorem{thm}{Theorem}[section]
\newtheorem{lem}[thm]{Lemma}
\newtheorem{cor}[thm]{Corollary}
\theoremstyle{definition}
\newtheorem{defi}[thm]{Definition}
\newtheorem{rem}[thm]{Remark}
\newcommand\R{{\mathbb R}}
\newcommand\C{{\mathbb C}}
\newcommand\N{{\mathbb N}}
\newcommand\A{\boldsymbol{\alpha}}
\newcommand\MScN[1]{\href{http://www.ams.org/mathscinet-getitem?mr=#1}{\nolinkurl{(#1)}}}
\newcommand\DOI[1]{\href{http://dx.doi.org/#1}{(doi: \nolinkurl{#1})}}
\newcommand\LINK[1]{\href{#1}{(link: \nolinkurl{#1})}}
\title{A note on bifurcations from eigenvalues of the Dirichlet-Laplacian with arbitrary multiplicity}
\author{Sim\~ao Correia* and M\'ario Figueira}
\begin{document}

\maketitle

 \begin{abstract}
 	In this short note, we consider the elliptic problem
 	$$
 	\lambda \phi + \Delta \phi = \eta|\phi|^\sigma \phi,\quad \phi\big|_{\partial \Omega}=0,\quad \lambda, \eta \in \C,
 	$$
 	on a smooth domain $\Omega\subset \R^N$, $N\ge 1$. The presence of complex coefficients, motivated by the study of complex Ginzburg-Landau equations, breaks down the variational structure of the equation. We study the existence of nontrivial solutions as bifurcations from the trivial solution. More precisely, we characterize the bifurcation branches starting from eigenvalues of the Dirichlet-Laplacian of arbitrary multiplicity. This allows us to discuss the nature of such bifurcations in some specific cases. We conclude with the stability analysis of these branches under the complex Ginzburg-Landau flow.
 	\vskip10pt
 	\noindent\textbf{Keywords}: complex Ginzburg-Landau, bound-states, bifurcation.
 	\vskip10pt
 	\noindent\textbf{AMS Subject Classification 2010}:  35Q56, 35B10,	35B32,  35B35.
 \end{abstract}
  
  \section{Introduction}
  
  \subsection{Description of the problem and main results}
  Consider the complex Ginzburg-Landau equation on a smooth domain $\Omega\subset \R^n$ and Dirichlet boundary conditions:
  \begin{equation}\tag{CGL}\label{CGL} 
  \begin{cases}
  v_t = e^{i \theta} \Delta v + e^{i \gamma} |v|^\sigma v + k v, \quad v = v(x, t),\, (x,t)\in \Omega\times[0,T]\\
  v\big|_{\partial \Omega\times(0,T)}=0
  \end{cases} \mbox{for }\gamma, k\in \R,\ |\theta|<\pi/2,\ \sigma>0,
  \end{equation}
The Ginzburg-Landau
equation is a model for several physical and chemical phenomena
such as superconductivity or chemical turbulence. We refer \cite{DGHN}, \cite{Mielke} and the references
cited therein. A very interesting remark is the fact that \eqref{CGL} can be viewed as
a dissipative version of the nonlinear Sch\"odinger equation, which admits
solutions developing localized singularities in finite time. Local existence, 
global existence and uniqueness of solutions of \eqref{CGL} are widely studied on both $\R^N$
or a domain $\Omega \subset \R^N$ under various boundary conditions and
assumptions on the parameters; see \cite{DGL,GV1,GV2,Okaza} and 
the references therein. On the other hand, there are not many results concerning the blow-up of the solutions
of \eqref{CGL}: we refer e.g. \cite{CDW-Blowup} and \cite{Masmou}.

In the analysis of evolution partial differential equations which possess a gauge invariance (see Definition \ref{defi:gauge}),  one may look for particular solutions - \textit{bound-states} -  of the form $e^{i\omega t}u(x)$. In the case of the the Ginzburg-Landau equation above,
the profile $u$ must satisfy the stationary problem
\begin{equation}\label{eq:SW}
	i \omega u = e^{i \theta} \Delta u + e^{i \gamma} |u|^\sigma u + k \phi\quad \mbox{ in } \Omega, \quad u\big|_{\partial \Omega}=0.
\end{equation}
We rewrite equation \eqref{eq:SW} in the more convenient way
  \begin{equation}\tag{BS}\label{BS}
	\lambda u + \Delta u = \eta|u|^\sigma u,\quad \lambda, \eta \in \C.
\end{equation}
For $\lambda,\eta\in \R$, there is an extensive theory on the existence and qualitative properties of solutions to \eqref{BS}. On one hand, one may take advantage of the variational structure (that is, the fact that the equation corresponds to the critical points of a well-behaved functional) to construct solutions via minmax arguments, depending on the signs of $\lambda$, $\eta$ (for $\eta<0$, one usually requires a subcritical nonlinearity, $\sigma<4/(N-2)^+$). The literature on variational arguments is too vast, we simply refer to the books \cite{AmbrosettiArcoya}, \cite{AmbrosettiMalchiodi} and references therein. On the other hand, one may explore the existence of small solutions through the application of bifurcation arguments, starting from the eigenvalue problem $\lambda\phi+\Delta\phi=0$. For simple eigenvalues, a direct Implicit Function Theorem argument can be applied. For eigenvalues of odd multiplicity, the existence of bifurcation branches can be shown using the Leray-Schauder topological degree (\cite{Krasn}). For general multiple eigenvalues, one may apply a Lyapunov-Schmidt reduction and solve a nonlinear problem on the corresponding eigenspace. This approach is quite standard, see for example \cite{AmbrosettiProdi,CrandallRabinowitz,Dancer,DelPinoGarciaMusso,MugnaiPistoia}. In the presence of symmetries (e.g. when $\Omega$ is a disk), more refined results can be found in \cite{Miyamoto} and \cite{Dancer2}. Finally, in some very specific cases, one can characterize exactly all bifurcation branches: see \cite{DelPinoGarciaMusso}, for the second eigenvalue in the square,  and also \cite{MugnaiPistoia}, for either the case of general eigenvalues in a rectangle or the second eigenvalue in a cube.

For complex $\lambda$ and $\eta$, the known results are much more scarce. In fact, the variational structure collapses and the min-max arguments available in the real-coefficients case are of no use. Another approach is to reduce the general \eqref{BS} (via a nontrivial transformation) to the real coefficients case. This approach, available for $\Omega=\R$ (\cite{MS_GL1}), seems innaplicable for any other domain. This leaves us reduced to the construction of solutions via bifurcation arguments. In the past decade, there have been some works in this direction \cite{CDW, CDP,MS_GL1,MS_GL2}, where one applies a bifurcation argument starting from a solution to \eqref{BS} with real coefficients. Observe that, once one considers complex coefficients, all eigenvalues have \textit{even} real multiplicity and one cannot apply the results of \cite{Krasn}. The results of \cite{Dancer} also do not fit our framework, as the general bifurcation results presented therein are restricted to the real-valued case.

%

Our main goal in this short note is to state precisely a bifurcation result for \eqref{BS} in the spirit of \cite{CrandallRabinowitz}, starting from eigenvalues of any multiplicity and for $\eta\in \C$. The first result gives sufficient conditions for the existence of bifurcation branches (cf. Definition \ref{defi:bifurc} below):

\begin{thm}\label{thm:exist}  Fix $N\ge 1$. Let $\Omega\subset \R^N$ be a bounded domain and $1 \leq \sigma \leq 4/(N-2)^+$. Let 
	$\tilde{\lambda}$ be an eigenvalue of the Dirichlet-Laplace operator with multiplicity $p\geq 2$, with $L^2$-orthogonal
	real-valued eigenfunctions $u_1, \dots, u_p$. Suppose that the system
	\begin{equation}\label{polinomios}
		\begin{split}
			&P_m(\alpha_2, \dots, \alpha_p) = \int_\Omega
			\left|u_1+\sum_{ j=2}^p \alpha_j u_j\right|^\sigma \left(u_1+\sum_{ j=2}^p \alpha_j u_j\right) 
			(\alpha_m u_1 - u_m)dx = 0,\quad 
			m=2, \dots, p
		\end{split}
	\end{equation}
	has a solution ${\A}^0 = (\alpha_2^0,\ldots, \alpha_p^0)\in \C^{p-1}\simeq \R^{2p-2}$ such that
	\begin{equation}\label{eq:jacobiano}
		\frac{\partial(P_2,\ldots, P_p)}{\partial(\alpha_2,\ldots,\alpha_p)}(\A^0) \neq 0.
	\end{equation}
	Then there exists $\delta > 0$ 
	and a Lipschitz mapping
	$$ \varepsilon\in [0, \delta) \rightarrow (y, \lambda , \A)\in H_0^1(\Omega)\times \C \times \C^{p-1} $$
	with $(y(0), \lambda(0), \A(0)) = (0, \tilde{\lambda}, \A^0)$ and 
	$(y,u_i)_{L^2(\Omega)} = 0, i=1, \ldots, p$, such that
	\begin{equation}\label{estim-sw}
		(\lambda(\epsilon),u(\epsilon)) :=\left(\lambda(\epsilon), y(\varepsilon) + \varepsilon u_1 + \varepsilon \sum_{ j=2}^p \alpha_j(\varepsilon) u_j\right)  
	\end{equation}
	is a bifurcation branch  starting from $(\tilde{\lambda},0)$. Moreover,
	$ \| y(\varepsilon) \|_{H^1_0} \leq C \varepsilon^{\sigma+1} $ and
	\begin{equation}\label{lambda}
		\lambda(\varepsilon) =  \tilde{\lambda} +\eta \varepsilon^{\sigma}
		\int_\Omega \left|u_1+\sum_{ j=2}^p \alpha_j^0 u_j\right|^\sigma \left(u_1+\sum_{ j=2}^p \alpha_j^0 u_j\right)   u_1  dx + o(\varepsilon^\sigma)
	\end{equation}
\end{thm}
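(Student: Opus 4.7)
The approach is a Lyapunov--Schmidt reduction in the spirit of \cite{CrandallRabinowitz}, adapted to the complex setting. Assume without loss of generality that $\{u_j\}_{j=1}^p$ is $L^2$-orthonormal, let $E=\mathrm{span}_{\C}(u_1,\ldots,u_p)$, and denote by $P$ and $Q=I-P$ the associated $L^2$-orthogonal projectors. With the ansatz
$u = \varepsilon\,\phi(\A) + \varepsilon w$, where $\phi(\A):=u_1+\sum_{j=2}^p\alpha_j u_j\in E$ and $w\in Q(H^1_0(\Omega))$, together with $\lambda = \tilde\lambda + \mu$, the equation \eqref{BS} becomes
\begin{equation*}
\mu\,\phi(\A) + (\tilde\lambda+\mu+\Delta)w \;=\; \eta\,\varepsilon^{\sigma}\,|\phi(\A)+w|^{\sigma}(\phi(\A)+w),
\end{equation*}
after using $(\tilde\lambda+\Delta)u_j=0$ and dividing by $\varepsilon$.

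The first step is to project onto $Q$. Since $\tilde\lambda+\Delta$ restricted to $Q(H^1_0)$ is an isomorphism onto $Q(H^{-1})$, and the subcriticality $1\le\sigma\le 4/(N-2)^{+}$ makes $u\mapsto|u|^{\sigma}u$ a $C^1$ map from $H^1_0$ to $H^{-1}$, the implicit function theorem provides, for $(\varepsilon,\mu,\A)$ near $(0,0,\A^0)$, a unique Lipschitz solution $w=w(\varepsilon,\mu,\A)\in Q(H^1_0)$ with $w(0,0,\A)=0$ and $\|w\|_{H^1_0}=O(\varepsilon^{\sigma})$; the function $y:=\varepsilon w$ then satisfies $\|y\|_{H^1_0}=O(\varepsilon^{\sigma+1})$ as claimed.

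Next, I take $L^2$-inner products with $u_1$ and with $u_m$ ($m\ge 2$) to extract the $E$-component, obtaining $\mu=\eta\varepsilon^{\sigma}\!\int_{\Omega}|\phi+w|^{\sigma}(\phi+w)u_1\,dx$ and $\mu\alpha_m=\eta\varepsilon^{\sigma}\!\int_{\Omega}|\phi+w|^{\sigma}(\phi+w)u_m\,dx$. Multiplying the first by $\alpha_m$, subtracting, and cancelling the factor $\eta\varepsilon^{\sigma}$ yields the reduced bifurcation system
\begin{equation*}
\widetilde P_m(\varepsilon,\A) \;:=\; \int_{\Omega}|\phi(\A)+w|^{\sigma}(\phi(\A)+w)(\alpha_m u_1 - u_m)\,dx \;=\; 0,\qquad m=2,\ldots,p,
\end{equation*}
which at $\varepsilon=0$ (where $w\equiv 0$) reduces exactly to the polynomial system \eqref{polinomios}.

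Finally, I apply the real implicit function theorem to the map $\widetilde P=(\widetilde P_2,\ldots,\widetilde P_p):\R\times\R^{2p-2}\to\R^{2p-2}$ at $(\varepsilon,\A)=(0,\A^0)$. The nondegeneracy hypothesis \eqref{eq:jacobiano} is exactly the invertibility of its Jacobian with respect to $\A$, and furnishes a Lipschitz curve $\varepsilon\mapsto\A(\varepsilon)$ on some $[0,\delta)$ with $\A(0)=\A^0$; substitution into the $u_1$-component equation then gives $\mu(\varepsilon)$ and, by expanding at $\varepsilon=0$, the asymptotic formula \eqref{lambda}. The main technical obstacle (and the reason why the branch is only Lipschitz rather than $C^1$) is the limited smoothness of $u\mapsto|u|^{\sigma}u$ for $\sigma$ close to $1$; a second, more conceptual, difficulty is that the map is not holomorphic in $\A$, so the Jacobian in \eqref{eq:jacobiano} must be read as a real Jacobian on $\R^{2p-2}\simeq\C^{p-1}$ and this real-linear structure has to be carried through every IFT application.
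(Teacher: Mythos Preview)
Your approach is essentially the paper's: Lyapunov--Schmidt reduction onto the eigenspace, solve the $Q$-projected equation for the orthogonal component, then apply an implicit function theorem to the remaining finite-dimensional system. The paper differs only in technical execution: it obtains $y$ by a contraction argument (Lemma~\ref{lem:construcao_y}, borrowed from \cite{MS_GL2}) rather than the $C^1$ IFT, and then applies Clarke's Lipschitz implicit function theorem to the finite system, treating $(\lambda,\A)$ \emph{jointly} as unknowns in terms of $\varepsilon_1$.

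There is one genuine, though easily repaired, gap in your write-up. You solve for $w=w(\varepsilon,\mu,\A)$, so $w$ carries a $\mu$-dependence, yet you then define $\widetilde P_m(\varepsilon,\A)$ as though it did not and propose to recover $\mu(\varepsilon)$ only \emph{after} $\A(\varepsilon)$ has been found from $\widetilde P=0$. This is circular: the reduced equations you want to solve for $\A$ still contain $\mu$ through $w$. The fix is either to adjoin the $u_1$-projection as an extra equation and apply the IFT to the joint map $(\mu,\A)\mapsto(F_1,\widetilde P_2,\ldots,\widetilde P_p)$---this is precisely what the paper does, and the Jacobian at $(0,\tilde\lambda,\A^0)$ is block triangular with $\partial_\mu F_1=1$ in the upper corner and your nondegeneracy hypothesis \eqref{eq:jacobiano} in the lower block---or to first solve the $u_1$-equation for $\mu=\mu(\varepsilon,\A)$ (trivially, since at $\varepsilon=0$ it reads $\mu=0$ with unit $\mu$-derivative) and substitute before invoking the IFT in $\A$.
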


\begin{rem}
	We observe that the conditions presented in the above theorem are  completely independent on the parameter $\eta\in \C$. This is not to say that the branch is independent on $\eta$ (when $\eta$ is not real, both $\lambda$ and $u$ cannot be real, by direct integration of \eqref{BS}).
\end{rem}

\begin{rem}
We only guarantee that the resulting bifurcation branches are Lipschitz continuous because we will apply an Implicit Function Theorem for Lipschitz functions. This could be further improved, but we do not pursue this here. In \cite{DelPinoGarciaMusso} and \cite{MugnaiPistoia}, the resulting branches are $C^1$.
\end{rem}

\begin{rem}
	If $\eta\in \R$ and the assumptions of Theorem \ref{thm:exist} are satisfied for some $\A_0\in \R^{p-1}$, then the corresponding bifurcation branch will be real-valued. Indeed, it suffices to go through the proof of Theorem \ref{thm:exist} and replace the field $\C$ with $\R$. 
\end{rem}

 \begin{rem}\label{rem:simples}
	Our methodology could be applied to recover the bifurcation result for simple eigenvalues from \cite{MS_GL1} (analogous to the real-valued case \cite{AmbrosettiProdi,CDW,CrandallRabinowitz}): if $\tilde{\lambda}$ is a simple eigenvalue of the Dirichlet-Laplace operator with eigenfunction $u_1$,
	there exists a unique bifurcation branch (modulo gauge symmetry) in the neighbourhood of $(\tilde{\lambda},0)$ defined by
	$$\phi = y(\varepsilon) + \varepsilon u_1 \quad {\it and}  \quad \lambda(\varepsilon) =  \tilde{\lambda} +\eta \varepsilon^{\sigma} \int_\Omega |u_1|^\sigma u_1^2  dx + o(\varepsilon^\sigma).$$
\end{rem}

Our second result complements Theorem \ref{thm:exist}, as it relates bifurcation branches to solutions of \eqref{polinomios}.

\begin{thm}\label{thm:caract}
	 Fix $N\ge 1$. Let $\Omega\subset \R^N$ be a bounded domain and $1 \leq \sigma < 4/(N-2)^+$. Let 
	 $\tilde{\lambda}$ be an eigenvalue of the Dirichlet-Laplace operator with multiplicity $p\geq 2$, with $L^2$-orthogonal
	 real-valued eigenfunctions $u_1, \dots, u_p$. Suppose that $(\lambda,u):[0,\delta)\to \C\times H^1_0(\Omega)$ is a bifurcation branch starting at $(\tilde{\lambda},0)$. Given $\epsilon_n\to 0$,  there exists $u_0\in H^1_0(\Omega)\setminus\{0\}$ with $-\Delta u_0=\tilde{\lambda}u_0$ such that, up to a subsequence, $u(\epsilon_n)/\|u(\epsilon_n)\|_{L^\infty}\to u_0$ in $H^1_0(\Omega)\cap L^\infty(\Omega)$. After a possible reordering of $u_1,\dots,u_p$, write
	$$
	u_0=c(u_1+\alpha_2u_2+\dots + \alpha_p u_p), \quad c,\alpha_2,\dots,\alpha_p\in \C,\ c\neq 0.
	$$
	Then $\A=(\alpha_2,\dots,\alpha_p)$ is a solution to \eqref{polinomios}.
	
	%
\end{thm}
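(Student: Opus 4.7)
My plan is to normalize $u(\epsilon_n)$ by its $L^\infty$-norm, use elliptic regularity to extract a subsequence converging to an eigenfunction $u_0$, and then recover the polynomial system \eqref{polinomios} by testing the equation against a carefully chosen combination of eigenfunctions.

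First, I would set $v_n:=u(\epsilon_n)/\|u(\epsilon_n)\|_{L^\infty}$. Since $u(\epsilon_n)\to 0$ in $H^1_0(\Omega)$ (by the definition of a bifurcation branch), a standard bootstrap applied to
\[
-\Delta u(\epsilon_n) = -\lambda(\epsilon_n)u(\epsilon_n) + \eta|u(\epsilon_n)|^\sigma u(\epsilon_n),
\]
exploiting the strict subcriticality $\sigma<4/(N-2)^+$, should upgrade this to $\|u(\epsilon_n)\|_{L^\infty}\to 0$. The normalized sequence then satisfies
\[
-\Delta v_n = \lambda(\epsilon_n)v_n - \eta\|u(\epsilon_n)\|_{L^\infty}^{\sigma}|v_n|^\sigma v_n,\quad \|v_n\|_{L^\infty}=1,
\]
whose right-hand side is uniformly bounded in $L^\infty(\Omega)$. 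Elliptic $L^p$-regularity for the Dirichlet-Laplacian then gives $v_n$ uniformly bounded in $W^{2,p}(\Omega)$ for every $p<\infty$, hence in $C^{1,\alpha}(\overline\Omega)$; Arzela-Ascoli supplies a subsequence converging in $C^1(\overline\Omega)$, and a fortiori in $H^1_0(\Omega)\cap L^\infty(\Omega)$, to a limit $u_0$. Passing to the limit in the equation yields $-\Delta u_0 = \tilde\lambda u_0$, $u_0|_{\partial\Omega}=0$, and $\|u_0\|_{L^\infty}=1$, so $u_0$ is a nontrivial eigenfunction. Decomposing $u_0$ in the basis $\{u_1,\dots,u_p\}$ and reordering so that the $u_1$-coefficient does not vanish gives $u_0 = c(u_1+\alpha_2 u_2+\dots+\alpha_p u_p)$ as required.

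For the system \eqref{polinomios}, I would fix $m\in\{2,\dots,p\}$ and set $\beta_m(n) := \int_\Omega u(\epsilon_n)u_m\,dx / \int_\Omega u(\epsilon_n)u_1\,dx$, which is well-defined for $n$ large because the denominator divided by $\|u(\epsilon_n)\|_{L^\infty}$ tends to $\int_\Omega u_0 u_1\,dx\neq 0$; the same computation gives $\beta_m(n)\to\alpha_m$ (modulo the $L^2$-normalization of the basis implicit in \eqref{polinomios}). Since $\phi_{n,m}:=\beta_m(n)u_1-u_m$ is itself a Dirichlet eigenfunction at $\tilde\lambda$, testing \eqref{BS} against $\phi_{n,m}$ and integrating by parts produces
\[
(\lambda(\epsilon_n)-\tilde\lambda)\int_\Omega u(\epsilon_n)\phi_{n,m}\,dx = \eta\int_\Omega |u(\epsilon_n)|^\sigma u(\epsilon_n)\phi_{n,m}\,dx,
\]
and the left-hand side vanishes by the very definition of $\beta_m(n)$. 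Dividing by $\|u(\epsilon_n)\|_{L^\infty}^{\sigma+1}$ and passing to the limit using the uniform convergence $v_n\to u_0$ together with $\beta_m(n)\to\alpha_m$ yields $\int_\Omega |u_0|^\sigma u_0(\alpha_m u_1-u_m)\,dx = 0$; writing $u_0=c w$ with $w = u_1+\sum_{j\ge 2}\alpha_j u_j$ and dividing out $|c|^\sigma c\neq 0$ gives exactly $P_m(\A)=0$.

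The main obstacle is the compactness step: one needs enough regularity on $v_n$ to pass to the limit in the nonlinear integrals $\int |u|^\sigma u\,\phi$, which forces $C^1$-type convergence rather than mere weak $H^1_0$ limits. The strict subcriticality hypothesis $\sigma<4/(N-2)^+$ enters precisely here, enabling the bootstrap from $H^1_0$-smallness to $L^\infty$-smallness and the ensuing elliptic regularity chain; everything else is an algebraic manipulation driven by the clever choice of test function that kills the linear contribution.
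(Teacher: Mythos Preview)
Your proof is correct and follows the same overall strategy as the paper: normalize by the $L^\infty$-norm, use elliptic regularity and compactness to extract a subsequence converging to an eigenfunction $u_0$, and then test the equation against combinations of the form $\alpha_m u_1 - u_m$ to recover the system \eqref{polinomios}. The one point of departure is your choice of the $n$-dependent coefficient $\beta_m(n)$ in the test function $\phi_{n,m}=\beta_m(n)u_1-u_m$: this makes the linear contribution $(\lambda(\epsilon_n)-\tilde\lambda)\int u(\epsilon_n)\phi_{n,m}$ vanish \emph{identically} for each $n$, so you never need to control the ratio $(\lambda(\epsilon_n)-\tilde\lambda)/\|u(\epsilon_n)\|_\infty^\sigma$. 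The paper instead uses the fixed test function $\overline{\alpha_m}u_1-u_m$, whose pairing with the $V$-component of $\tilde u_n$ only vanishes in the limit; to handle the residual linear term it first tests against $u_{\A}$ and proves this ratio converges (equation \eqref{rho}). Your route is a notch more direct; the paper's has the side benefit of extracting the leading order of $\lambda(\epsilon_n)-\tilde\lambda$ along the branch. (Both arguments implicitly use that the $u_j$ have equal $L^2$-norm, which you correctly flag.)
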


As a consequence, if all solutions to \eqref{polinomios} satisfy the nondegeneracy condition \eqref{eq:jacobiano}, one may characterize all bifurcation branches:

\begin{cor}\label{cor:equiv}
	Under the conditions of Theorem \ref{thm:caract}, suppose that all solutions to \eqref{polinomios} satisfy the nondegeneracy condition \eqref{eq:jacobiano}. Then, for all $\epsilon\in(0,\delta)$ sufficiently small, there exists a bifurcation branch $(\lambda',u'):[0,\delta')\to \C\times H^1_0(\Omega)$, built through the application of Theorem \ref{thm:exist}, and $\epsilon'\in (0,\delta')$ such that $(\lambda(\epsilon),u(\epsilon))=(\lambda'(\epsilon'),u'(\epsilon'))$, modulo gauge invariance (cf. Definition \ref{defi:gauge}).
\end{cor}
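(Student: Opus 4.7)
The plan is to combine Theorem \ref{thm:caract} with the local uniqueness inherent to the Implicit Function Theorem employed in Theorem \ref{thm:exist}: every bifurcation branch must asymptotically resemble one of the profiles for which Theorem \ref{thm:exist} applies, and the nondegeneracy condition then forces the two branches to coincide in a neighbourhood of $(\tilde\lambda,0)$, modulo the gauge symmetry of \eqref{BS}.

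Let $(\lambda,u):[0,\delta)\to \C\times H^1_0(\Omega)$ be the given branch. Fix a sequence $\varepsilon_n\to 0^+$. Theorem \ref{thm:caract} produces (after passing to a subsequence and after a reordering of $u_1,\dots,u_p$) convergence $u(\varepsilon_n)/\|u(\varepsilon_n)\|_{L^\infty}\to u_0=c(u_1+\sum_{j=2}^p\alpha_j u_j)$ in $H^1_0\cap L^\infty$, where $\A=(\alpha_2,\dots,\alpha_p)$ solves \eqref{polinomios}. By the standing nondegeneracy hypothesis, $\A$ satisfies \eqref{eq:jacobiano}, so Theorem \ref{thm:exist} yields a bifurcation branch $(\lambda',u'):[0,\delta')\to\C\times H^1_0$ starting from $(\tilde\lambda,0)$ with $\A'(0)=\A$.

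To identify the two branches, I would decompose
$$u(\varepsilon_n)=y_n+\sum_{i=1}^p\beta_i^n u_i,\qquad y_n\perp_{L^2}\mathrm{span}\{u_1,\dots,u_p\},$$
and observe that $\beta_1^n/\|u(\varepsilon_n)\|_{L^\infty}\to c\neq 0$, so $\beta_1^n\neq 0$ for $n$ large. Invoking the gauge invariance of \eqref{BS} (Definition \ref{defi:gauge}), I replace $u(\varepsilon_n)$ by $e^{i\theta_n}u(\varepsilon_n)$ with $\theta_n=-\arg\beta_1^n$. Setting $\varepsilon_n'=|\beta_1^n|>0$, a direct computation shows that the gauge-rotated solution takes precisely the form \eqref{estim-sw},
$$e^{i\theta_n}u(\varepsilon_n)=\tilde y_n+\varepsilon_n' u_1+\varepsilon_n'\sum_{j=2}^p\alpha_j^n u_j,\qquad \alpha_j^n=\beta_j^n/\beta_1^n,$$
with $\tilde y_n\perp u_i$, $\alpha_j^n\to \alpha_j$, $\lambda(\varepsilon_n)\to\tilde\lambda$, and $\|\tilde y_n\|_{H^1_0}\to 0$. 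The triple $(\tilde y_n,\A^n,\lambda(\varepsilon_n))$ thus lies in a shrinking neighbourhood of $(0,\A,\tilde\lambda)$ and solves the same functional equation on which Theorem \ref{thm:exist} invoked the IFT (with parameter $\varepsilon_n'$). By the uniqueness part of that IFT, this triple coincides with $(y'(\varepsilon_n'),\A'(\varepsilon_n'),\lambda'(\varepsilon_n'))$, giving $(\lambda(\varepsilon_n),u(\varepsilon_n))=(\lambda'(\varepsilon_n'),u'(\varepsilon_n'))$ modulo gauge.

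The main obstacle is to promote this subsequential identification to a statement valid for \emph{every} sufficiently small $\varepsilon$. This reduces to checking that the reordering and the gauge choice made above are globally consistent in $\varepsilon$. The nondegeneracy hypothesis, via the IFT, implies that the set of normalized profiles arising as limits --- that is, solutions of \eqref{polinomios} taken over all admissible reorderings, modulo the $S^1$ action --- is discrete in $L^\infty$. Combined with the continuity of $\varepsilon\mapsto u(\varepsilon)/\|u(\varepsilon)\|_{L^\infty}$ and the connectedness of its limit set as $\varepsilon\to 0^+$, this discreteness forces all accumulation points to lie in a single gauge orbit. Consequently a single reordering and a continuous choice of $\theta(\varepsilon)$ work uniformly, and the matching argument above applies for all small $\varepsilon$, yielding the claim.
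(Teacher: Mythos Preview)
Your subsequential argument is essentially the paper's Step~1: decompose $u(\varepsilon_n)$ along $V\oplus V^\perp$, gauge-rotate so that the $u_1$-coefficient is positive, and then appeal to the uniqueness in Lemma~\ref{lem:construcao_y} (for $y$) and in the Lipschitz Implicit Function Theorem behind Theorem~\ref{thm:exist} (for $(\lambda,\A)$) to identify $(\lambda(\varepsilon_n),u(\varepsilon_n))$ with a point on the constructed branch. This part matches the paper exactly.

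Where you diverge is in the passage from subsequences to all small $\varepsilon$. You work rather hard here, invoking discreteness of the solution set of \eqref{polinomios} modulo $S^1$ together with connectedness of the limit set of $\varepsilon\mapsto u(\varepsilon)/\|u(\varepsilon)\|_{L^\infty}$, in order to conclude that a \emph{single} branch and a continuous gauge work uniformly. The paper's Step~2 is far simpler and does not aim for this stronger conclusion: it argues by contradiction. If the corollary failed along some sequence $\varepsilon_n\to 0$, then by Step~1 a subsequence of $(\lambda(\varepsilon_n),u(\varepsilon_n))$ \emph{does} lie (modulo gauge) on a Theorem~\ref{thm:exist} branch, which is already a contradiction. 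Note that the corollary, as stated, only asserts that for each small $\varepsilon$ there exists \emph{some} Theorem~\ref{thm:exist} branch through $(\lambda(\varepsilon),u(\varepsilon))$; the branch is allowed to depend on $\varepsilon$. Your approach proves more --- that the given branch eventually coincides (modulo gauge) with a single constructed branch --- at the cost of the extra topological ingredients; the paper's two-line contradiction suffices for what is actually claimed.
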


\begin{rem}
	Our results can also be extended (in the same spirit as \cite{MS_GL2,DelPinoGarciaMusso}) to the more general case $\lambda  u  + \Delta  u  =\eta|u|^\sigma u(1+r(x,|u|))$, with $r\in C^1(\Omega\times \R^+, \C)$ such that
	\begin{itemize}
		\item $r(x,0)=0$ for all $x\in \Omega$;
		\item There exists $\sigma_1>0$ with $\sigma+\sigma_1<4/(N-2)^+$ such that $|r(x,y)|\lesssim 1+|y|^{\sigma_1}$ for all $(x,y)\in \Omega\times \R^+$.
	\end{itemize}
\end{rem}

For specific domains, such as the $N$-dimensional rectangular parallelepiped, the conditions of Corollary \ref{cor:equiv} are verified, recovering in particular the results of \cite{DelPinoGarciaMusso} and \cite{MugnaiPistoia}. Notice, however, that in \cite{DelPinoGarciaMusso, MugnaiPistoia}, the authors obtain $C^1$ branches (and not just Lipschitz) and also compute the Morse index of each bifurcation branch.

 \begin{cor}\label{cor:cubo}
	Take $\Omega$ be an $N$-dimensional rectangular parallelepiped and $\sigma=2$ and fix an eigenvalue $\tilde{\lambda}$ with multiplicity $p$. If $p\ge 4$, suppose additionally that any linearly independent eigenfunctions associated to $\tilde{\lambda}$, $u_1,\dots, u_4$, satisfy
\begin{equation}\label{eq:hipotese4}
	\int_\Omega u_1u_2u_3u_4 =0.
\end{equation}
	  Then the real solutions of \eqref{polinomios} are given by $(\alpha_2,\dots \alpha_p)\in \{-1,0,1\}^{p-1}$ and there exist exactly $\frac{3^p-1}{2}$ real bifurcation branches starting from $(\tilde{\lambda},0)$ (modulo gauge invariance).
\end{cor}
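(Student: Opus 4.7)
The plan is to apply Corollary \ref{cor:equiv} and thereby reduce the statement to (i) showing that the real solutions of \eqref{polinomios} with $\sigma=2$ are exactly the vectors in $\{-1,0,1\}^{p-1}$, each satisfying \eqref{eq:jacobiano}, and (ii) counting the resulting branches modulo reordering of $u_1,\dots,u_p$ and gauge invariance.

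For (i), I would first observe that \eqref{polinomios} is a gradient condition: setting $W(\A)=u_1+\sum_{j\ge 2}\alpha_j u_j$ and $F(\A)=\tfrac{1}{4}\int_\Omega W^4\,dx$, one has $P_m=\alpha_m\,\partial_{\alpha_1}F-\partial_{\alpha_m}F$, so \eqref{polinomios} amounts to $\nabla F\parallel(1,\A)$. The eigenfunctions on $\Omega=\prod_\ell(0,L_\ell)$ are products of sines, so by the product-to-sum formula every 4-fold integral $\int u_iu_ju_ku_l$ factorises over coordinates, each one-dimensional factor being a signed sum over $(s_2,s_3,s_4)\in\{\pm1\}^3$ of the indicators $\mathbf{1}\{k_{i,\ell}+s_2k_{j,\ell}+s_3k_{k,\ell}+s_4k_{l,\ell}=0\}$. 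Combining this with the common eigenvalue constraint $\sum_\ell k_{i,\ell}^2/L_\ell^2=\tilde\lambda/\pi^2$ yields three vanishings via short case analyses: $\int u_i^3u_j=0$ for $i\ne j$; $\int u_i^2u_ju_k=0$ for distinct $i,j,k$; and $\int u_iu_ju_ku_l=0$ for distinct $i,j,k,l$, which for $p\ge 4$ is precisely hypothesis \eqref{eq:hipotese4}. Hence $F$ reduces to $\tfrac{A}{4}\sum_i\alpha_i^4+\tfrac{3}{2}\sum_{i<j}B_{ij}\alpha_i^2\alpha_j^2$ with $A=\int u_i^4$ and $B_{ij}=\int u_i^2u_j^2$, and
\[P_m=\alpha_m\bigl[(A-3B_{1m})(1-\alpha_m^2)+3\textstyle\sum_{j\ne 1,m}(B_{1j}-B_{mj})\alpha_j^2\bigr]=0.\]

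A parity argument forces $A\ne 3B_{1m}$ for any two distinct basis eigenfunctions: $A=(3/8)^N\prod_\ell L_\ell$ while $3B_{1m}=(3/8^N)\prod_\ell L_\ell\cdot\prod_\ell(2+\mathbf{1}_{k_{1,\ell}=k_{m,\ell}})$, and equating them reduces to $3^{N-1}=2^{N-n}3^n$ with $n\in\{0,\dots,N-1\}$ the number of shared coordinates, which has no integer solution. On the support $S$ of any candidate real solution, \eqref{eq:hipotese4} forces the cross sum $\sum_{j\in S\setminus\{1,m\}}(B_{1j}-B_{mj})$ to vanish: polarising $B_{1j}-B_{mj}=\int(u_1-u_m)(u_1+u_m)u_j^2$ and using the product structure of the eigenfunctions, a nontrivial mismatch would produce a non-vanishing quadruple of four distinct basis elements, contradicting \eqref{eq:hipotese4}. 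With the cross term eliminated, $P_m=0$ becomes $\alpha_m\in\{-1,0,1\}$; conversely every $\A\in\{-1,0,1\}^{p-1}$ solves the system, and the Jacobian $\partial(P_2,\dots,P_p)/\partial(\alpha_2,\dots,\alpha_p)$ is diagonal with entries $(1-3\alpha_m^2)(A-3B_{1m})\ne 0$, establishing \eqref{eq:jacobiano}.

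For (ii), Theorem \ref{thm:exist} promotes each such $\A$ to a real branch, and two solutions (possibly arising from different reorderings of $u_1,\dots,u_p$) yield branches identified modulo gauge iff the associated vectors $u_1+\sum_j\alpha_ju_j\in\{-1,0,1\}^p$ differ only by a global sign or by a relabelling of which nonzero coordinate is chosen as $u_1$. Counting nonzero elements of $\{-1,0,1\}^p$ modulo $\pm 1$ gives exactly $(3^p-1)/2$ branches. The principal obstacle in this plan is the compatibility step, namely deducing the vanishing of $\sum_j(B_{1j}-B_{mj})\alpha_j^2$ on each valid support from \eqref{eq:hipotese4}. In dimension $N=2$ this is automatic, since two distinct basis eigenfunctions in the same eigenspace cannot share any coordinate index (else the eigenvalue constraint would force them equal) and all $B_{ij}$ coincide; in higher dimensions one needs the polarisation argument indicated above.
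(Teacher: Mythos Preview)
Your overall plan coincides with the paper's: compute the quartic integrals of basis eigenfunctions, simplify $P_m$, read off the real zeros, verify \eqref{eq:jacobiano}, and count. Your branch count (nonzero vectors in $\{-1,0,1\}^p$ modulo a global sign) is equivalent to the paper's telescoping sum $3^{p-1}+\dots+1$, and your parity check that $A\ne 3B_{1m}$ is correct.

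The genuine gap is precisely where you locate it: the compatibility step. You are right --- and more careful than the paper --- that $B_{ij}=\int_\Omega u_i^2u_j^2$ need not be independent of $(i,j)$: whenever the wave-vectors of $u_i$ and $u_j$ agree in a coordinate $\ell$, the corresponding one-dimensional factor is $3L_\ell/8$ rather than $L_\ell/4$. The paper simply asserts $\int u_m^2u_k^2=\prod_\ell L_\ell/4$ for all $k\ne m$ and thereby obtains a decoupled equation in $\alpha_m$ alone; your coupled formula
\[
P_m=\alpha_m\Bigl[(A-3B_{1m})(1-\alpha_m^2)+3\!\!\sum_{j\ne 1,m}(B_{1j}-B_{mj})\alpha_j^2\Bigr]
\]
is the correct one in general, and the cross sum must indeed be dealt with.

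However, the polarisation argument you propose cannot eliminate it. The identity $B_{1j}-B_{mj}=\int(u_1-u_m)(u_1+u_m)u_j^2$ involves $u_j$ \emph{twice}; hypothesis \eqref{eq:hipotese4} speaks only of products of four \emph{linearly independent} eigenfunctions and says nothing about integrals containing a repeated factor. No rearrangement of a product of two squares yields four independent eigenfunctions, so \eqref{eq:hipotese4} gives no control over $B_{1j}-B_{mj}$. Concretely, in $(0,\pi)^3$ with $\tilde\lambda=54$, take $u_1,u_2,u_3$ with wave-vectors $(1,2,7),(2,1,7),(1,7,2)$; then $B_{12}=B_{13}=\tfrac{3\pi}{8}\bigl(\tfrac{\pi}{4}\bigr)^2$ while $B_{23}=\bigl(\tfrac{\pi}{4}\bigr)^3$, and for the candidate $\A=(1,1,0,\dots,0)$ your formula gives $P_2=3(B_{13}-B_{23})\ne0$. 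Thus either \eqref{eq:hipotese4} already fails for this eigenvalue (which would have to be checked independently), or the corollary requires a stronger hypothesis than stated; in neither case does the polarisation mechanism you sketch supply the missing step. Without it, neither the claim that every $\A\in\{-1,0,1\}^{p-1}$ solves \eqref{polinomios} nor the claim that there are no other real solutions is established for $N\ge 3$.
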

\begin{rem}
	In \cite{MugnaiPistoia}, assumption \eqref{eq:hipotese4} is missing. If one considers the function $J_\lambda$ defined therein, one finds some terms  involving the product of four different eigenfunctions. However, in their proof, these terms were overlooked. While \eqref{eq:hipotese4} could be true for $N\ge 2$ independently on $\tilde{\lambda}$, a proof of this statement is unavailable.
\end{rem}

\begin{rem}
	Assumption \eqref{eq:hipotese4} does not hold in general. Indeed, if $\Omega=(0,\pi)^4$, one may check that
	$$
	u_m(x_1,x_2,x_3,x_4)=\prod_{j=1}^4\sin(k_j^mx_j),\quad (k_1^m,\dots,k_4^m)=\begin{cases}
		(1,2,3,4),\ m=1\\
		(2,3,4,1),\ m=2\\
		(3,4,1,2),\ m=3\\
		(4,1,2,3),\ m=4,
	\end{cases}
	$$
	do not satisfy \eqref{eq:hipotese4}, despite being associated to the same eigenvalue.
\end{rem}

\begin{rem}
In the case of the square $\Omega=(0,\pi)^2$, the second eigenvalue is double, with eigenfunctions
$$
u_1(x,y)=\sin(x)\sin(2y),\quad u_2(x,y)=\sin(2x)\sin(y).
$$
A simple computation (see the proof of Corollary \ref{cor:cubo}) shows that
$$
P_2(\alpha_2)=\frac{\pi^2}{8}  (\alpha^2-1)\Re\alpha_2 -\frac{5\pi^2}{64} (|\alpha_2|^2-1)\alpha_2.
$$
Apart from the real solutions $\alpha_2=0,\pm 1$, one can also check that $\alpha_2=\pm i$ is also a solution. Therefore, the corresponding bifurcation branch will be strictly complex, \textit{even if $\eta$ is real}.
\end{rem}

\begin{rem}
	For double eigenvalues, system \eqref{polinomios} reduces to a single equation. If $\sigma$ is even and we restrict ourselves to the real case $\alpha_2\in \R$, $P_2$ becomes a polynomial on $\alpha_2$. Assumming that $P_2\nequiv 0$, the maximum number of bifurcation branches of the form \eqref{estim-sw} is $\sigma+2$.
\end{rem}

\begin{rem}
	In the case of the disk  $\Omega=\{(x,y)\in\R^2 : x^2+y^2=1\}$, it is known (see \cite{Damascelli} and \cite[Section 2.2]{Miyamoto}) that the second eigenvalue $\lambda_2$ is double. In polar coordinates, the eigenspace is generated by $u_1=J(r)\cos(\theta)$ and $u_2=J(r)\sin(\theta)$, where $J$ is the Bessel function of first kind of order 1. Then
	$$
P_2(\alpha_2)= \int_\Omega |u_1+\alpha_2u_2|^2(u_1+\alpha_2u_2)(\alpha_2u_1-u_2) dx =C\left((\alpha_2^2-|\alpha_2|^2)\alpha_2 + 2i\Im \alpha_2\right),\quad C\neq 0.
	$$
	If $\alpha_2\in \R$, $P_2\equiv 0$ and, 
	in this case, our result is not applicable. This is to be expected, since it was proven in \cite{Miyamoto} that there exists a continuum of real bifurcation branches stemming from $\lambda_2$. For general real bifurcation results under the presence of symmetry, the reader may also consult \cite{Dancer2}.
\end{rem}

Having built solutions to \eqref{BS}, one may now consider the stability of the corresponding bound-state under the complex Ginzburg-Landau flow. Fix an eigenvalue $\tilde{\lambda}$ and a bifurcation branch $(\lambda(\epsilon), u(\epsilon))$ starting at $(\tilde{\lambda},0)$ given either by Theorem \ref{thm:exist} (in the multiple case) or by Remark \ref{rem:simples} (in the simple case). Set $k(\epsilon)-i\omega(\epsilon):=e^{i\theta}\lambda(\epsilon)$. Then, as already mentioned, $v=e^{i\omega(\epsilon)t}u(\epsilon)$ solves \eqref{CGL} for $k=k(\epsilon)$.

\begin{thm}\label{thm:inst} Let $\Omega\subset \R^N$ be a bounded domain and $1 \leq \sigma \le 2/(N-2)^+$. Suppose that $\tilde{\lambda}$ is an eigenvalue of the Dirichlet-Laplace operator and it is not the first eigenvalue. 
	Then, for  $\varepsilon$ sufficiently small, the standing wave $u$ is orbitally unstable.
\end{thm}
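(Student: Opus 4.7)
\noindent\textbf{Proof plan for Theorem \ref{thm:inst}.} My plan is to establish orbital instability of $u$ by linearizing \eqref{CGL} around the standing wave $v(t,x)=e^{i\omega(\epsilon)t}u(\epsilon,x)$, exhibiting an eigenvalue of the linearization with positive real part that is transversal to the gauge orbit, and then invoking the classical unstable-manifold theorem for semilinear parabolic equations.

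First I would substitute $v(t,x)=e^{i\omega(\epsilon)t}(u(\epsilon,x)+w(t,x))$ into \eqref{CGL} and cancel zeroth-order terms via the profile equation for $u(\epsilon)$; this produces $w_t=\mathcal{L}_\epsilon w+\mathcal{N}_\epsilon(w)$ on $H_0^1(\Omega;\C)$, viewed as a real Hilbert space, where
\begin{equation*}
\mathcal{L}_\epsilon w = e^{i\theta}\Delta w + (k(\epsilon)-i\omega(\epsilon))\,w + e^{i\gamma}\!\left[\tfrac{\sigma+2}{2}|u|^\sigma w + \tfrac{\sigma}{2}|u|^{\sigma-2}u^2\,\bar w\right]
\end{equation*}
is $\R$-linear (but not $\C$-linear, because of the conjugation term) and $\mathcal{N}_\epsilon(w)=O(\|w\|^2)$. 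The assumption $\sigma\le 2/(N-2)^+$ makes the full nonlinearity locally Lipschitz on $H_0^1$, so \eqref{CGL} generates a well-defined local semiflow in which the principle of linearized instability can be applied. At $\epsilon=0$ one has $u\equiv 0$ and $k(0)-i\omega(0)=e^{i\theta}\tilde\lambda$, so $\mathcal{L}_0=e^{i\theta}(\Delta+\tilde\lambda)$ is $\C$-linear with pure point spectrum $\{e^{i\theta}(\tilde\lambda-\lambda_j):j\ge 1\}$, where $0<\lambda_1<\lambda_2\le\cdots$ are the Dirichlet eigenvalues of $-\Delta$ on $\Omega$. Since by hypothesis $\tilde\lambda$ is not the first eigenvalue we have $\tilde\lambda>\lambda_1$, and combined with $|\theta|<\pi/2$ this makes the isolated eigenvalue $\mu_0:=e^{i\theta}(\tilde\lambda-\lambda_1)$ satisfy $\re\mu_0=\cos(\theta)(\tilde\lambda-\lambda_1)>0$, with eigenfunction the principal Dirichlet eigenfunction $\phi_1$. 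Since $\|u(\epsilon)\|_{L^\infty}\lesssim\epsilon$ and $(k,\omega)(\epsilon)\to(k,\omega)(0)$, the difference $\mathcal{L}_\epsilon-\mathcal{L}_0$ has operator norm $o(1)$ on $H_0^1$, and Kato's analytic perturbation theory produces an isolated eigenvalue $\mu(\epsilon)$ of $\mathcal{L}_\epsilon$ close to $\mu_0$ with $\re\mu(\epsilon)>0$ for all sufficiently small $\epsilon$.

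Differentiating the gauge symmetry $v\mapsto e^{i\alpha}v$ of \eqref{CGL} at $\alpha=0$ yields $iu(\epsilon)\in\ker\mathcal{L}_\epsilon$ (a short direct check using the profile equation confirms the cancellation between the two conjugation-sensitive terms and the linear part), so the unstable eigenvalue $\mu(\epsilon)$ is spectrally separated from the zero eigenvalue induced by the gauge action, and its eigenfunction is not tangent to the orbit $\{e^{i\alpha}u(\epsilon):\alpha\in\R\}$. Having produced a genuine unstable direction modulo gauge, I would conclude orbital instability by invoking the standard unstable-manifold theorem for semilinear parabolic equations: this furnishes a local invariant unstable manifold through $u(\epsilon)$, transversal to the orbit, along which nontrivial trajectories escape a fixed tubular neighborhood of $\{e^{i\alpha}u(\epsilon)\}$ in finite time. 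The main obstacle I expect lies precisely in this last step: one must carefully quotient the gauge-induced zero eigenvalue out of the spectral decomposition so that the invariant manifold delivers orbital (rather than merely pointwise) instability, and one must also accommodate the fact that the bifurcation branch $\epsilon\mapsto u(\epsilon)$ is only Lipschitz, which forces the perturbation theory for $\mathcal{L}_\epsilon$ to be set up in a suitably weak regularity class.
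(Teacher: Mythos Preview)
Your argument is correct and reaches the same spectral conclusion as the paper, but the route is genuinely different. The paper does \emph{not} pass to a co-rotating frame: it keeps the standing wave $e^{i\omega(\epsilon)t}u(\epsilon)$ as a $T$-periodic orbit ($T=2\pi/\omega$) of \eqref{CGL}, linearizes in the original variables to obtain a time-periodic operator $A+B(t)$ with $A=e^{i\theta}\Delta+k(\epsilon)$, and studies the period map $U_0=R(T,0)$. Writing $U_0=e^{TA}+K(\epsilon)$ with $\|K(\epsilon)\|_{\mathcal L(L^2)}\le C\epsilon^\sigma$, the spectral mapping theorem for analytic semigroups gives $|\Sigma(e^{TA})|=\{e^{-T\cos\theta(\lambda_j-\tilde\lambda)+O(\epsilon^\sigma)}\}$, so $\tilde\lambda>\lambda_1$ forces an eigenvalue of $U_0$ outside the closed unit disk, and Henry's Theorem~8.2.4 yields orbital instability directly. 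The payoff of the paper's framing is that the gauge orbit $\{e^{i\alpha}u(\epsilon)\}$ \emph{is} the periodic trajectory, so the neutral direction you worry about is already absorbed into the periodic-orbit machinery and no separate quotienting step is needed. Your autonomous-in-the-rotating-frame approach is more elementary at the spectral level (ordinary Kato perturbation of $\mathcal L_0$ rather than a period map) and is robust even when $\omega(\epsilon)$ is small or vanishes, but the price is exactly the obstacle you flag at the end: you must explicitly separate the gauge zero mode $iu(\epsilon)$ from the unstable eigenspace before invoking an equilibrium-type unstable-manifold theorem.
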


\begin{rem}
	The condition $\sigma\le 2/(N-2)^+$ is only required in order to guarantee the local well-posedness of \eqref{CGL} (see \cite{Henry}).
\end{rem}

We also take this opportunity to present several examples that give some insight concerning the relationship between reflection symmetries and bifurcation branches exploited in \cite{DelPinoGarciaMusso}. As we will see, the equivalence found in \cite{DelPinoGarciaMusso} between the reflection symmetries of the square and the bifurcation branches starting at the second eigenvalue is a very particular feature and does not hold in general.

\vskip15pt\noindent \textbf{Structure of the article.} In Section 2, we present the proofs of the bifurcation results (Theorems \ref{thm:exist}, \ref{thm:caract} and their corollaries). In Section 3, we prove the instability result (Theorem \ref{thm:inst}). We conclude with a discussion on the connection between reflection symmetries and real bifurcation branches (Section 4).

\section{Bifurcation analysis}

%

\begin{defi}\label{defi:bifurc}
	 We say that a continuous mapping $(\lambda,u);[0,\delta)\to \C\times H^1_0(\Omega)$ is a bifurcation branch starting from $(\tilde{\lambda},0)$ if $(\lambda(0),u(0))=(\tilde{\lambda},0)$ and, for each $\epsilon\in (0,\delta)$, $u(\epsilon)$ is a nontrivial solution to \eqref{BS} with $\lambda=\lambda(\epsilon)$.
\end{defi}
\begin{defi}\label{defi:gauge}
	Equation \eqref{BS} is gauge invariant: if $u$ is a solution, so is $zu$, for any $z\in \mathbb{S}^1:=\{z\in\C: |z|=1\}$. If $(\lambda_1,u_1)$ and $(\lambda_2,u_2)$ are two bifurcation branches defined on $[0,\delta)$ and, for each $\epsilon\in[0,\delta)$, there exists $z(\epsilon)\in \mathbb{S}^1$ such that $u_1(\epsilon)=z(\epsilon)u_2(\epsilon)$, we say that the branches are the same modulo gauge invariance.
\end{defi}

	Define $L=-\Delta$ and $ M u  =  -\eta | u |^\sigma  u  $ and
	$$
	(u,v)_{L^2}=\int_\Omega u\overline{v}dx.
	$$ Represent by $V$ the corresponding
	eigenspace, spanned by $u_1,\ldots, u_p$. Taking  
	$P : L^2(\Omega) \rightarrow V^\perp$ and applying the Lyapunov-Schmidt reduction, equation \eqref{BS}
	is equivalent to the system 
	\begin{align}
		P(\lambda  u  - L  u  + M  u ) &  = 0 \label{eq1} \\
		(\lambda  u  - L  u  + M  u  , u_j)_{L^2} & = 0, \,\, j=1, \ldots, p \label{eq2}
	\end{align}
	We look for solutions of this system in the form
	$$  u  = y + \sum_{j=1}^p \epsilon_j u_j, \quad y \in V^\perp $$
	where, without loss of generality, we may assume $\epsilon_1 > 0$ (due to the gauge invariance of the problem). From \eqref{eq1},
	\begin{equation}\label{equ-y}
		y = (\lambda-PL)^{-1} \Big[-P M ( y + \sum_{j=1}^p \epsilon_j u_j) \Big] 
	\end{equation}
	and
	\begin{equation}\label{equ2}
		\epsilon_m (\lambda - e^{i\theta} \tilde\lambda) = - \Big(M (y + \sum_{j=1}^p \epsilon_j u_j), u_m \Big)_{L^2},
		\quad m=1, \ldots, p.
	\end{equation}

Following exactly the same proof as in \cite[Lemma 3.1]{MS_GL2}, 

\begin{lem}\label{lem:construcao_y}
	There exists $\delta_0>0$ such that,  for all $0<\delta<\delta_0$, given $\epsilon_1,\dots,\epsilon_p,\lambda\in \C$ with
\begin{equation}\label{eq:pequenez}
	|\epsilon_j|< \delta,\ j=1,\dots, p,\quad |\lambda-\tilde{\lambda}|<\delta,
\end{equation}
	there exists a unique solution 
	$ y = y(\epsilon_1,\dots,\epsilon_p,\lambda) \in H^1_0(\Omega)$
	of \eqref{equ-y}. Moreover, for some universal constants $C > 0, K > 0$,
	\begin{equation}\label{estim-y}
		\| y(\epsilon_1,\dots,\epsilon_p,\lambda)\|_{H^1_0} \leq C \delta^{{\sigma}+1}  \\
	\end{equation}
	and
	\begin{equation}\label{estim-lip}
		\|  y(\epsilon_1,\dots,\epsilon_p,\lambda) -  y(\epsilon_1',\dots,\epsilon_p',\lambda') \|_{H^1_0} \leq K\delta^{{\sigma+1}}| (\epsilon_1,\dots,\epsilon_p,\lambda) - (\epsilon_1',\dots,\epsilon_p',\lambda') | \\
	\end{equation}
	whenever  $(\epsilon_1,\dots,\epsilon_p,\lambda)$ and $ (\epsilon_1',\dots,\epsilon_p',\lambda') $ satisfy \eqref{eq:pequenez}.
	%
	%
\end{lem}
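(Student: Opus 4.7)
The plan is to realize $y$ as the unique fixed point of the nonlinear map
\[
\Phi(y) := (\lambda - PL)^{-1}\Bigl[-PM\Bigl(y + \sum_{j=1}^p \epsilon_j u_j\Bigr)\Bigr]
\]
on a small closed ball of $V^\perp \cap H^1_0(\Omega)$, via Banach's contraction principle. This is precisely the scheme carried out in \cite[Lemma 3.1]{MS_GL2} (there for $N=1$) and the same steps apply here.

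The first step is to show that, for $\lambda$ in a disk of radius $\delta_0$ around $\tilde\lambda$, the restriction of $\lambda - PL$ to $V^\perp$ has a bounded inverse $H^{-1}(\Omega)\cap V^\perp \to H^1_0(\Omega)\cap V^\perp$, uniformly in $\lambda$ and Lipschitz in $\lambda$. This is a consequence of the spectral gap of $-\Delta$ at $\tilde\lambda$ on $V^\perp$ combined with a Neumann-series perturbation argument.

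Next, I would estimate $M$ itself. Writing $u = y + \sum_j \epsilon_j u_j$, Hölder's inequality and the Sobolev embedding $H^1_0 \hookrightarrow L^{2N(\sigma+1)/(N+2)}$ (valid precisely for $\sigma \le 4/(N-2)^+$) give $\|Mu\|_{H^{-1}} \le C\|u\|_{H^1_0}^{\sigma+1} \le C(\|y\|_{H^1_0}+\delta)^{\sigma+1}$. Composing with the resolvent, $\|\Phi(y)\|_{H^1_0} \le C(\|y\|_{H^1_0}+\delta)^{\sigma+1}$, so $\Phi$ sends the closed ball of radius $C\delta^{\sigma+1}$ into itself once $\delta_0$ is taken small. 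The pointwise estimate $\bigl||u|^\sigma u - |v|^\sigma v\bigr| \lesssim (|u|^\sigma+|v|^\sigma)|u-v|$ combined with Hölder and the same Sobolev embedding then yields
\[
\|\Phi(y)-\Phi(y')\|_{H^1_0} \le C\delta^\sigma \|y-y'\|_{H^1_0},
\]
which is a strict contraction for $\delta$ small. Banach's theorem then produces the unique $y(\epsilon_1,\dots,\epsilon_p,\lambda)$ satisfying \eqref{estim-y}.

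For \eqref{estim-lip}, I would subtract the two fixed-point equations for $(\epsilon_j,\lambda)$ and $(\epsilon_j',\lambda')$, move the contraction part to the left-hand side, and exploit the Lipschitz dependence of both $M$ and the resolvent on the data; since the nonlinearity is of size $\delta^{\sigma+1}$, the resolvent-variation term contributes $O(\delta^{\sigma+1}|\lambda-\lambda'|)$, while the nonlinearity-difference term contributes $O(\delta^\sigma|\epsilon-\epsilon'|)$, which after relabeling constants gives the claimed Lipschitz bound. The main technical delicacy is the critical case $\sigma = 4/(N-2)^+$, which forces one to treat $M$ as an operator $H^1_0 \to H^{-1}$ rather than $H^1_0 \to L^2$; apart from that, the argument is entirely standard.
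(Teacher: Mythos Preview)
Your approach is correct and is precisely what the paper does: it gives no proof of its own but simply invokes \cite[Lemma~3.1]{MS_GL2}, whose argument is the Banach contraction you describe. One minor point: your $\epsilon$-variation estimate $O(\delta^{\sigma}|\epsilon-\epsilon'|)$ is the right one and does \emph{not} become $O(\delta^{\sigma+1})$ by ``relabeling constants''; the exponent $\sigma+1$ in \eqref{estim-lip} appears to be a slip in the statement, but since only smallness of the Lipschitz constant is used downstream, this is inconsequential.
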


	Setting $\alpha_j= \epsilon_j/ \epsilon_1$ and taking $m=1$ in \eqref{equ2},
	\begin{equation}\label{equ3}
		\begin{split}
			\lambda - e^{i\theta} \tilde\lambda & = - \frac{1}{\epsilon_1} \Big(M (y + \sum_{j=1}^p \epsilon_j u_j), u_1\Big)_{L^2} \\
			& = - \epsilon_1^\sigma \Big( M (u_1+  \sum_{j=2}^p \alpha_j u_j), u_1\Big)_{L^2} +  
			Q_1 (y, \epsilon_1, \alpha_2, \ldots,\alpha_p) \\
		\end{split}
	\end{equation}
	with
	\begin{equation}
		Q_1(y, \epsilon_1,\A) = -\frac{1}{\epsilon_1} \Big(M(y + \sum_{j=1}^p \epsilon_j u_j) - M(\sum_{j=1}^p \epsilon_j u_j), u_1\Big)_{L^2}.
	\end{equation}
	On the other hand, \eqref{equ2} also implies
	\begin{equation}\label{equ4}
		\begin{split}
			\alpha_m\Big(M (y + \sum_{j=1}^p \epsilon_j u_j), u_1\Big)_{L^2} &= -\alpha_m \epsilon_1 (\lambda - e^{i\theta} \tilde\lambda)
			= -\epsilon_m (\lambda - e^{i\theta} \tilde\lambda) \\
			& = \Big( M (y + \sum_{j=1}^p \epsilon_j u_j), u_m\Big)_{L^2}. \\
		\end{split}
	\end{equation}
	Setting
	\begin{equation}
		Q_m(y, \epsilon_1,\A)  = 
		\frac{1}{\epsilon_1^{\sigma+1}}
		\Big(M(y + \sum_{j=1}^p \epsilon_j u_j) - M(\sum_{j=1}^p \epsilon_j u_j), \overline{\alpha_m} u_1 - u_m \Big)_{L^2} ,
	\end{equation}
	it follows from \eqref{equ4} that 
	\begin{equation}\label{equ5}
		\Big(  M (u_1+  \sum_{j=2}^p \alpha_j u_j), \overline{\alpha_m} u_1 - u_m \Big)_{L^2} + Q_m = 0,\,\, m=2, \ldots, p.
	\end{equation}

	\begin{proof}[Proof of Theorem \ref{thm:exist}]
		From the above discussion and Lemma \ref{lem:construcao_y}, it suffices to prove that, setting $y=y(\epsilon_1,\dots, \epsilon_p,\lambda)$ given by Lemma \ref{lem:construcao_y}, equations \eqref{equ3} and \eqref{equ5} define $\lambda$ and $\alpha_j$ implicitly as function of $\epsilon_1$. First, observe that, if one drops the remainder terms $Q_m$, the system
		reduces to
		\begin{equation}\label{system}
			\begin{split}
				F_1(\epsilon_1,\lambda, \alpha_2, \ldots,\alpha_p) &=   \lambda -  \tilde\lambda +
				\epsilon_1^\sigma \Big( M (u_1+  \sum_{j=2}^p \alpha_j u_j), u_1\Big)_{L^2} = 0 \\
				F_m(\epsilon_1,\lambda, \alpha_2, \ldots,\alpha_p) &= P_m(\alpha_2, \ldots,\alpha_p) = 0,\,\, m=2, \ldots, p.
			\end{split}
		\end{equation}
		 This system satisfies the conditions 
		of the Implicit Function Theorem at $(\epsilon_1, \lambda, \A) = (0, \tilde \lambda,
		\A^0)$, since 
		$$ \frac{\partial (F_1,\ldots, F_p)}{\partial(\lambda, \alpha_2,\ldots,\alpha_p)}(\tilde{\lambda}~,\A^0) =
		\frac{\partial (P_2, \ldots, P_p)}{\partial( \alpha_2,\ldots,\alpha_p)} (\A^0) \neq0. $$
		Using the estimates \eqref{estim-y} and \eqref{estim-lip}, one may prove that $Q_m$, $m=1,\dots,p$ are Lipschitz continuous in $\epsilon_1, \lambda$ and $\alpha_m, m=2,\dots, p$, with  an arbitrarily small constant (see \cite[Proof of Theorem 1.4]{MS_GL2}). 
		Therefore  the system  \{\eqref{equ3},\eqref{equ5}\} is a small Lipschitz perturbation of \eqref{system} and one may apply the Implicit Function Theorem for Lipschitz functions (\cite[Section 7.1]{Clarke}) to finish the proof.
\end{proof}

\begin{proof}[Proof of Theorem \ref{thm:caract}] The proof follows closely the argument of \cite[Lemma 2.1]{DelPinoGarciaMusso}. 
	Take $\epsilon_n\to 0$ and write $\lambda_n=\lambda(\epsilon_n)$, $u_n=u(\epsilon_n)$, so that $ \lambda_n \rightarrow  \tilde\lambda$
	and $  u _n \rightarrow 0$ in $ C(\overline\Omega) $. After normalization
	$\tilde u_n = u_n/\|u_n\|_\infty$, we obtain
	\begin{equation}\label{sistematilde}
		\begin{cases} 
			\lambda_n \tilde u_n +  \Delta \tilde u_n + | u_n|^{\sigma}\tilde u_n = 0 \\
			\tilde u_n = 0 \quad{\rm on}\quad \partial\Omega
		\end{cases}
	\end{equation}

	Since $\tilde{u}_n$ is bounded in $C(\overline{\Omega})$, by compactness, $\tilde u_n \rightarrow u_0$ in $ C(\overline\Omega)$ and $\tilde{\lambda}u_0+\Delta u_0=0$.
	Moreover, since $\|\tilde u_n\|_{L^\infty(\Omega)}=1,$ $u\notequiv 0$. This implies that, up to reordering of $u_1,\dots, u_p$,
	$$
	u_0=c(u_1+\alpha_2u_2 +  \dots + \alpha_pu_p)=:cu_{\A},\quad c,\alpha_2,\dots, \alpha_m \in \C.
	$$
		Decomposing $\tilde u_n = \chi_n + \psi_n$
	with $\chi_n\in V, \psi_n \in V^\perp$, 
	\begin{equation}\label{sistemapsi}
			\tilde\lambda \psi_n  + \Delta \psi_n + (\lambda_n - \tilde\lambda) (\chi_n + \psi_n) 
			+     \|u_n\|_\infty^\sigma | \chi_n + \psi_n |^{\sigma} ( \chi_n + \psi_n)  = 0.
		\end{equation} 
	Multiplying by $\bar u_{\A}$ and integrating,
	$$ \frac{\lambda_n-\tilde\lambda}{\|u_n\|_\infty^\sigma} \int_\Omega \chi_n \bar u_{\A} dx +
	\int_\Omega |\chi_n + \psi_n |^{\sigma}(\chi_n + \psi_n)  \bar u_{\A} dx = 0 $$
	Therefore
	\begin{equation}\label{rho}
		\lim_{n\rightarrow\infty} \frac{\lambda_n-\tilde\lambda}{\|u_n\|_\infty^\sigma}  = - c^\sigma 
		\frac{\int_\Omega |u_{\A}|^{\sigma+2} dx }{\int_\Omega |u_{\A}|^2 dx} 
	\end{equation} 
Consider the elements 
	$$ v_{\alpha_m} := (\overline{\alpha_m} u_1 - u_m),\quad m = 2,\ldots,p $$
	and remark that $(u_{\A}, v_{\alpha_m})_{L^2}  = 0,\,\, m=2,\ldots,p$.
We multiply  \eqref{sistemapsi}
	by $\bar v_{\alpha_m}$, divide by $\|u_n\|_{L^\infty}^\sigma$, integrate over $\Omega$ to find
	$$
	\frac{\lambda_n-\tilde{\lambda}}{\|u_n\|_{\infty}^\sigma}\int_\Omega\psi_n\overline{v_{\alpha_m}} + \int_\Omega |\tilde{u}_n|^\sigma\tilde{u}_n \overline{v_{\alpha_m}} = 0
	$$
	In the limit, we find
	\begin{equation}\label{alphacondition}
		P_m(\alpha_2,\dots, \alpha_m)=\int_{\Omega}  |u_{\A}|^\sigma u_{\A} (\alpha_mu_1 - u_m) = 0, \quad m=2,\ldots,p
	\end{equation}
	and $\A=(\alpha_2,\ldots, \alpha_p)$ is a solution to \eqref{polinomios}.

\end{proof}

\begin{proof}[Proof of Corollary \ref{cor:equiv}] 
	\textit{Step 1}. As in the previous proof, if $\epsilon_n\to 0$, up to a subsequence,
	$$
	u(\epsilon_n)=c_n\|u(\epsilon_n)\|_{\infty}(u_1+\alpha_{2,n}u_2+\dots + \alpha_{p,n}u_p) + y_n,\quad y_n\in V^\perp,\  c_n\to c\neq0,
	$$
	and
	$$
	(\alpha_{2,n}, \dots, \alpha_{p,n}) \to \A \mbox{ solution to \eqref{polinomios}}.
	$$
	Using the gauge invariance, we may assume that $c_n\in \R^+$. Since $\|u(\epsilon_n)\|_{\infty}\to 0$ and $\lambda(\epsilon_n)\to \tilde{\lambda}$, Lemma \ref{lem:construcao_y} implies that, for $n$ large enough, $y_n$ is uniquely determined by the values of
	$$
	c_n\|u(\epsilon_n)\|_{\infty},\ \alpha_{2,n}, \dots,\alpha_{p,n}\ \mbox{and }\lambda(\epsilon_n). 
	$$
	On the other hand, the proof of Theorem \ref{thm:exist} implies that $c_n\|u(\epsilon_n)\|_{\infty}$ also determines uniquely the values of $\alpha_{2,n}, \dots,\alpha_{p,n}$ and $\lambda(\epsilon_n)$ in a neighbourhood of $(0,\tilde{\lambda},\A)$. Therefore, denoting by $(\lambda',u')$ by bifurcation branch built in Theorem \ref{thm:exist} starting from $\A$,  $$(\lambda(\epsilon_n),u(\epsilon_n))=(\lambda'(c_n\|u_n\|_\infty), u'(c_n\|u_n\|_\infty)).$$
	
	\textit{Step 2.} Suppose that there exists a sequence $\epsilon_n\to 0$ for which the claimed result does not hold. By Step 1, there is a subsequence which does satisfy Corollary \ref{cor:equiv}, leading to a contradiction.
\end{proof}

\begin{proof}[Proof of Corollary \ref{cor:cubo}]
To reduce system \eqref{polinomios}, one computes explicitly the coefficients (as it was done in \cite{MugnaiPistoia}). Write $\Omega=\prod_{j=1}^N (0,L_j)$ and define $\phi_{k,L}(x)=\sin(k\pi x/L)$, $k\in \N$, $L>0$. Then
$$
\int_0^L \phi_{k,L}(x)^4 dx = \frac{3L}{8}, \quad \int_0^L \phi_{k,L}(x)^2\phi_{j,L}(x)^2 dx = \frac{L}{4}, \ j\neq k.
$$
The eigenspace associated to $\tilde{\lambda}$ is generated by $u_1,\dots, u_p$ of the form
$$
\prod_{j=1}^N \phi_{k_j,L_j}(x_j),\quad\mbox{ where }(k_1,\dots, k_N)\in \N^N\mbox{ satisfies} \sum_{j=1}^N \frac{\pi^2 k_j^2}{L_j^2}= \tilde{\lambda}.
$$
Then, for $1\le m \le p$,
$$
\int_\Omega u_m^4(x)dx = \prod_{j=1}^N \frac{3L_j}{8}=: A,\quad \int_\Omega u_m^2u_k^2(x)dx = \prod_{j=1}^N \frac{L_j}{4}=: B,\ k\neq m.
$$
Following \cite[Proof of Theorem 1.1]{MugnaiPistoia}, 
$$
\int_\Omega u_m^2(x)u_k(x)u_l(x)dx =0,\quad k\neq l.
$$
Finally, by \eqref{eq:hipotese4}, if $p\ge 4$ and $k,m,l,h$ are four different indices,
$$
\int_\Omega u_m(x)u_k(x)u_l(x)u_h(x)dx=0.
$$
This allows us to compute
\begin{align*}
P_m(\alpha_2,\dots,\alpha_p)&=\int_\Omega
\left|u_1+\sum_{ j=2}^p \alpha_j u_j\right|^2 \left(u_1+\sum_{ j=2}^p \alpha_j u_j\right) 
{(\alpha_m u_1 - u_m)} dx\\& = (B-A)(|\alpha_m^2|-1)\alpha_m - 2B(\alpha_m^2-1)\Re\alpha_m.
\end{align*}
Hence the real solutions to \eqref{polinomios} are given by
$$
 \alpha_m \in \{0,\pm 1\}, \quad m=2,\dots, p.
$$
A simple computation shows these solutions are nondegenerate. Thus the number of bifurcations with a nonzero $u_1$-coefficient is $3^{p-1}$. 

Next, if one looks for the branches where the $u_1$-coefficient is zero and the $u_2$-coefficient is non-zero, the same computations yield $3^{p-2}$ bifurcation branches. Iterating this procedure, the total number of bifurcations is
$$
3^{p-1}+3^{p-2}+\dots + 1 = \frac{3^p-1}{2}.
$$ 
\end{proof}

\begin{rem} Fix $\Omega=(0,1)^2$. A consequence of Corollary \ref{cor:cubo} is that, for cubic nonlinearities and under \eqref{eq:hipotese4}, the real bifurcation branches correspond to $(\alpha_2^0,\alpha_3^0)\in \{-1,0,1\}^2$.  One may ask if the same happens for other nonlinearities. In the case of the quintic nonlinearity $\sigma =4$, the first triple eigenvalue has eigenspace generated by
$$
u_1(x,y)=\sin(\pi x)\sin (7\pi y),\quad u_2(x,y)=u_1(y,x),\quad u_3(x,y)=\sin(5\pi x)\sin(5\pi y).
$$
There are nine bifurcations of the form
$$
u_3 + \alpha_2 u_1 + \alpha_3 u_2
$$
for $(\alpha_2,\alpha_3)$ as shown in Figure \ref{fig:sigma4}. It becomes clear that the set of solutions to \eqref{polinomios} becomes nontrivial for higher order nonlinearities.
\newpage
\end{rem}
\begin{figure}[h]
	\centering
	\includegraphics[width=.3\linewidth]{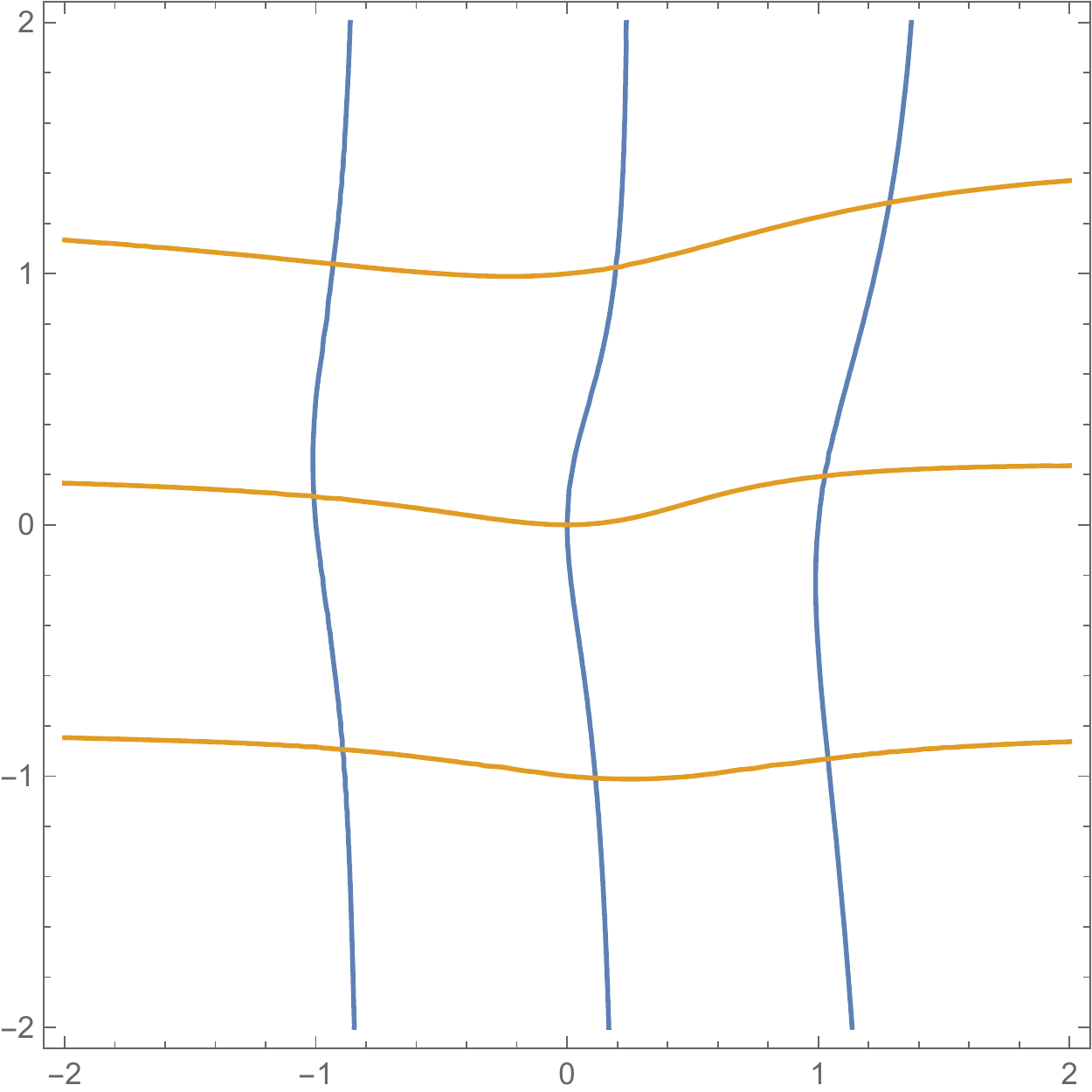}
	\caption{The nodal sets for $P_2$ and $P_3$. The nine bifurcations for which the $u_3$-component is nontrivial correspond to the nine intersection points.} 
	\label{fig:sigma4}
\end{figure}

\section{Stability of bound-states under the (CGL) flow}

To prove Theorem \ref{thm:inst}, we perform an analysis on the spectrum of the period map for the linearized equation
\begin{equation}\label{eq-variational}
	v_t = e^{i\theta} \Delta v  + k v + B(t) v
\end{equation}
with
\begin{equation}\label{operador-B}
	B(t) v = e^{i \gamma} 
	\left( \frac{\sigma+2}{2} | u(t)|^\sigma v + \frac{\sigma}{2} u(t)^{1+\sigma/2} {\bar u(t)}^{\sigma/2-1} \bar v\right) 
\end{equation} 
Define the evolution operator $R(t, s)$ for the equation \eqref{eq-variational} as
$$ R(t, s) v_0 = v(t; s, v_0), $$
where $v$ is the solution of \eqref{eq-variational} with initial data $v(s)=v_0$. We now define the period map as the linear operator
$U_0 = R(T, 0), \, T = 2 \pi/\omega$.

\begin{proof}[Proof of the Theorem \ref{thm:inst}] 
	Throughout the proof, it is convenient consider real spaces composed 
	of complex valued functions. We represent them using bold face characters.
	\vskip10pt
	
	Represent by $S(t) = e^{A t}$ the  semigroup generated by the operator 
	$A := A(\varepsilon) = e^{i\theta} \Delta + k\,$,
	$ k= \Re e^{i\theta}\lambda(\varepsilon)$.
	It is well-known that $S(t)$ is an analytic semigroup in ${\bf L}^2(\Omega)$ (see, e.g., \cite[Theorem 2.7, pg. 211]{Pazy}). Since $\|u\|_{{\bf H}^1_0(\Omega)}=O(\epsilon)$, a standard bootstrap argument yields $\|u\|_{{\bf L}^\infty(\Omega)}\lesssim O(\epsilon)$. Thus
	the linear operator $B(t)$ referred in \eqref{operador-B} satisfies
	\begin{align}\label{estim-B}
		\| B(t) \|_{\mathcal{L} (\bf{L}^2(\Omega))} &\leq \left\|	 \frac{\sigma+2}{2} | u(t)|^\sigma + \frac{\sigma}{2} u(t)^{1+\sigma/2} {\bar u(t)}^{\sigma/2-1} \right\|_{\bf{L}^\infty(\Omega)}\\&\leq (\sigma +1) \|u\|_{\bf{L}^\infty(\Omega)}^\sigma \leq
		(\sigma +1) C \varepsilon^\sigma
	\end{align}
	for all $t\in [0, T]$ and for some universal constant $C$.
	Consider now the linear evolution equation
	$$ v_t = A v + B(t) v.$$
	Given an initial data $v_0 \in {\bf L}^2(\Omega)$,
	\begin{equation}\label{eq-integral}
		R(t,0) v_0 = v(t) = S(t) v_0 + \int_0^t S(t-s) B(s) v(s) ds 
	\end{equation}
	and
	\begin{equation}\label{estim-v}
			\|v(t)\|_{{\bf L}^2(\Omega)}  \leq C \|v_0\|_{{\bf L}^2(\Omega)} + \int_0^t C (\sigma + 1) \varepsilon^\sigma
			\|v(s)\|_{{\bf L}^2(\Omega)} ds 
	\end{equation}
	for all $ t\in [0, T]$. Using Gronwall's inequality, we conclude that
\begin{equation}
	\|v(t)\|_{{\bf L}^2(\Omega)}  \leq C e^{CT}  \|v_0\|_{{\bf L}^2(\Omega)},\quad t\in [0,T].
\end{equation}
	 It follows from \eqref{estim-B}, \eqref{eq-integral} and \eqref{estim-v} 
	\begin{equation}\label{periodic map}
		U_0 = S(T) + K(\varepsilon)
	\end{equation}
	with 
	\begin{equation}\label{estim-K}
		K(\varepsilon) = \int_0^T S(t-s) B(s) v(s) ds, \quad
		\|  K(\varepsilon) \|_{ \mathcal{L} ({\bf L}^2(\Omega))} \leq C \varepsilon^\sigma 
	\end{equation}
	On the other hand, the spectrum of the operator $A$, $\Sigma (A)$, verifies
	\begin{equation}
			 \Re \, \Sigma (A) := \{\Re \lambda : \lambda \in \Sigma (A)\} 
			= \left\{ - \cos \theta (\lambda_j-\tilde{\lambda}) + O(\varepsilon^\sigma) \right\}_{j\ge1}
	\end{equation} 
	where $\lambda_1<\lambda_2\le \lambda_3 \le \dots $ are the eigenvalues of the Dirichlet-Laplace operator. 
	Since $S(t)$ is an analytic semigroup,
	the spectral mapping theorem establishes $\Sigma(S(T)) = e^{T \Sigma (A)}$ (\cite{Nagel}, pag. 281). Thus
	$$ \{ |\lambda| : \lambda \in \Sigma (S(T))\} = \{
	e^{-T \cos \theta (\lambda_j-\tilde{\lambda}) + O(\varepsilon^\sigma)} \}_{j \ge 1}. $$
	From \eqref{periodic map} and \eqref{estim-K}, the period map $U_0$ is a small perturbation of $S(T)$, which implies the existence of an eigenvalue $\mu\in\sigma(U_0)$ with $|\mu|>1$. Since we are working on a real Banach space and $S(t)$ is an analytic semigroup, the result now follows from \cite[Th. 8.2.4]{Henry}. 

\end{proof}

\section{Real bifurcations and reflection symmetries}
\vskip10pt
To conclude, we now discuss the connection between reflection symmetries of $\Omega$ and real bifurcation branches. A reflection symmetry is any reflection mapping with respect to a hyperplane which leaves $\Omega$ invariant. In particular, the hyperplane (or symmetry axis) splits $\Omega$ into two congruent domains $\Omega_1$ and $\Omega_2$ which are reflections of one another. For a given eigenvalue $\lambda$, suppose that there exists an eigenfunction which vanishes along the symmetry axis and is positive in $\Omega_1$. This implies in particular that $\lambda$ is the first eigenvalue in $\Omega_1$ and so, being simple, there exists a unique bifurcation branch when the problem is restricted to $\Omega_1$. Through an odd extension of the bifurcation branch to the whole $\Omega$, we then obtain a bifurcation branch for our initial problem.

In the case of the second eigenvalue of the square, the corresponding (two-dimensional) eigenspace includes four elements whose nodal lines are precisely the four reflection axii of the square:
\begin{figure}[h]
	\centering
	\begin{subfigure}{.2\textwidth}
		\centering
		\includegraphics[width=.6\linewidth]{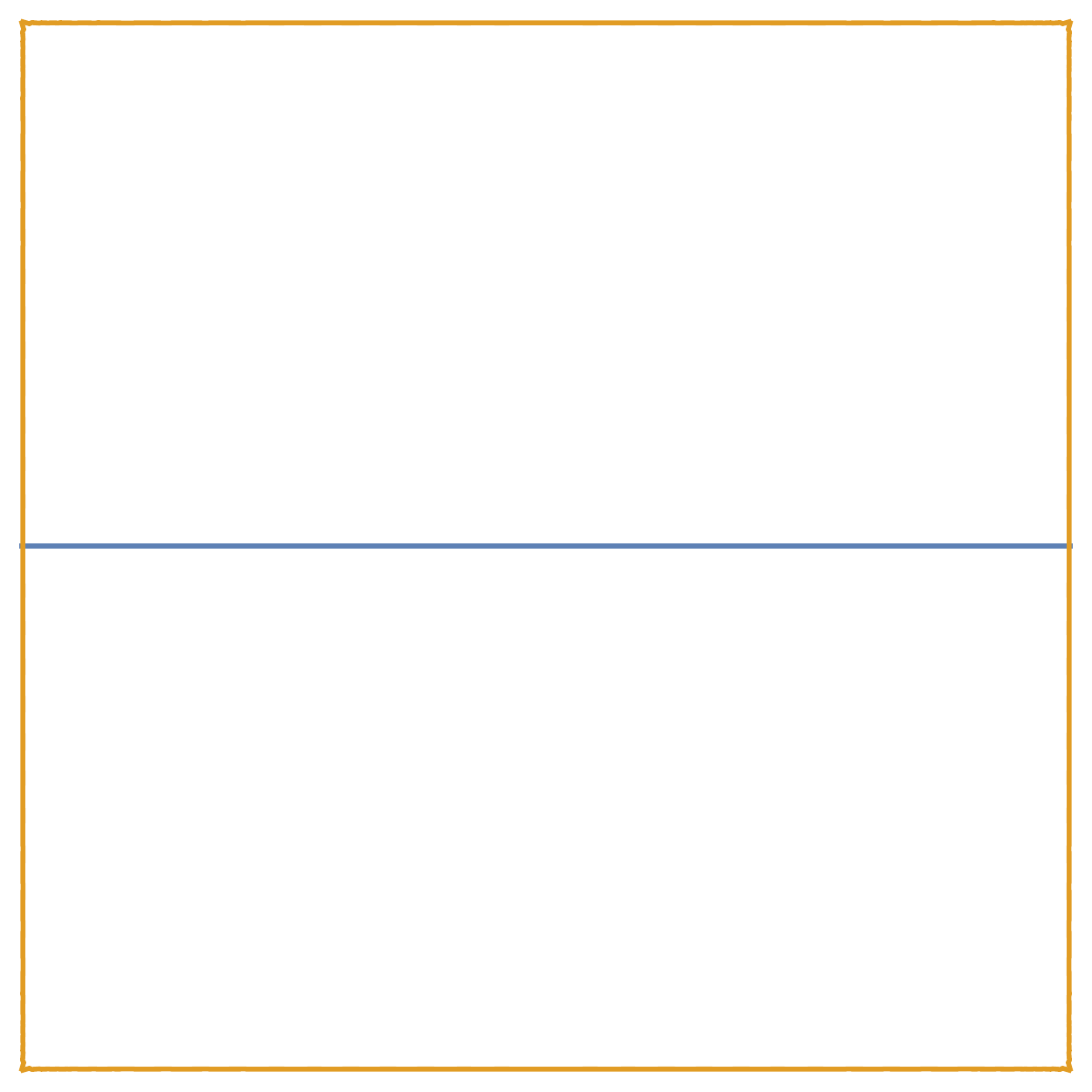}
	\end{subfigure}%
	\begin{subfigure}{.2\textwidth}
		\centering
		\includegraphics[width=.6\linewidth]{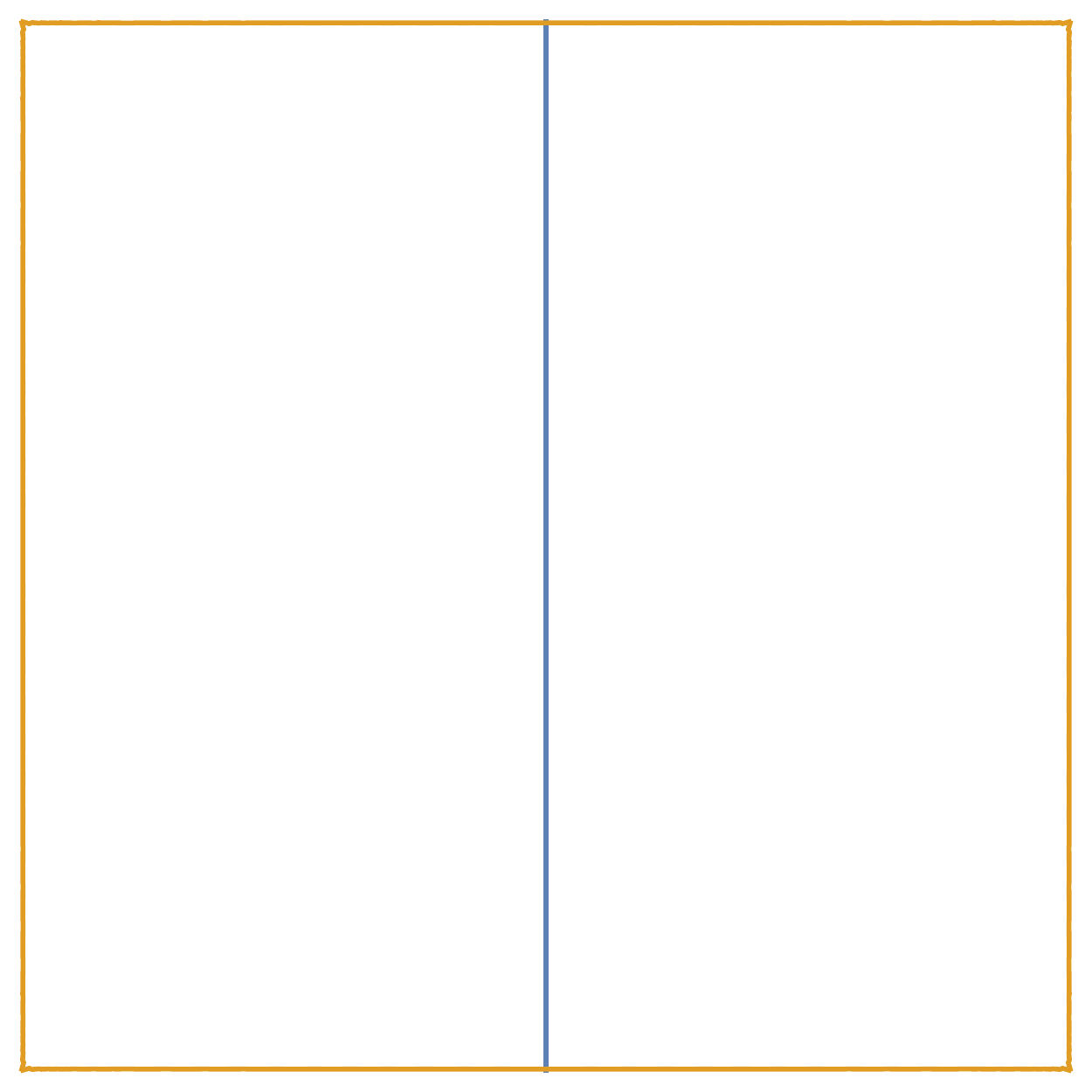}
	\end{subfigure}
	\begin{subfigure}{.2\textwidth}
		\centering
		\includegraphics[width=.6\linewidth]{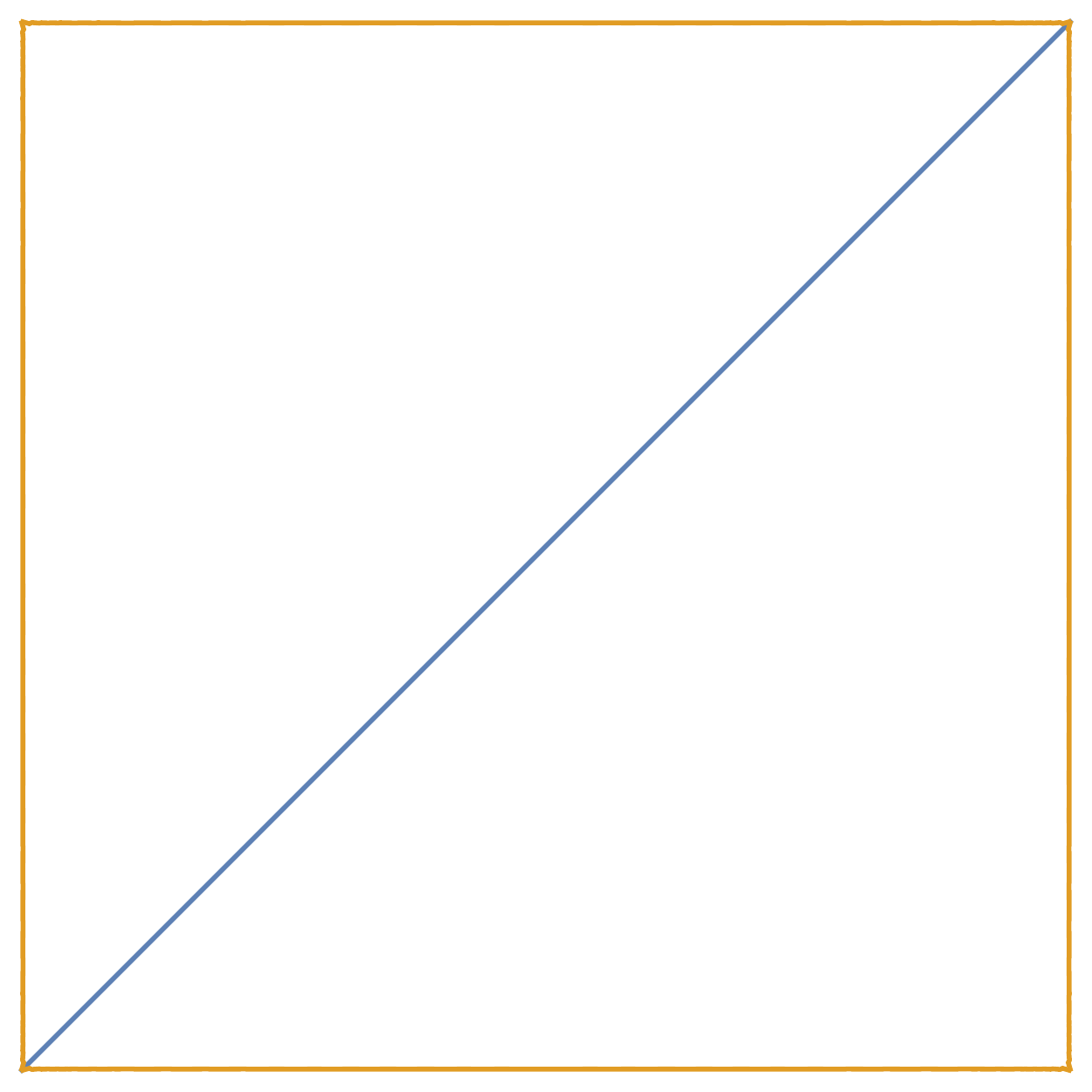}
	\end{subfigure}
	\begin{subfigure}{.2\textwidth}
		\centering
		\includegraphics[width=.6\linewidth]{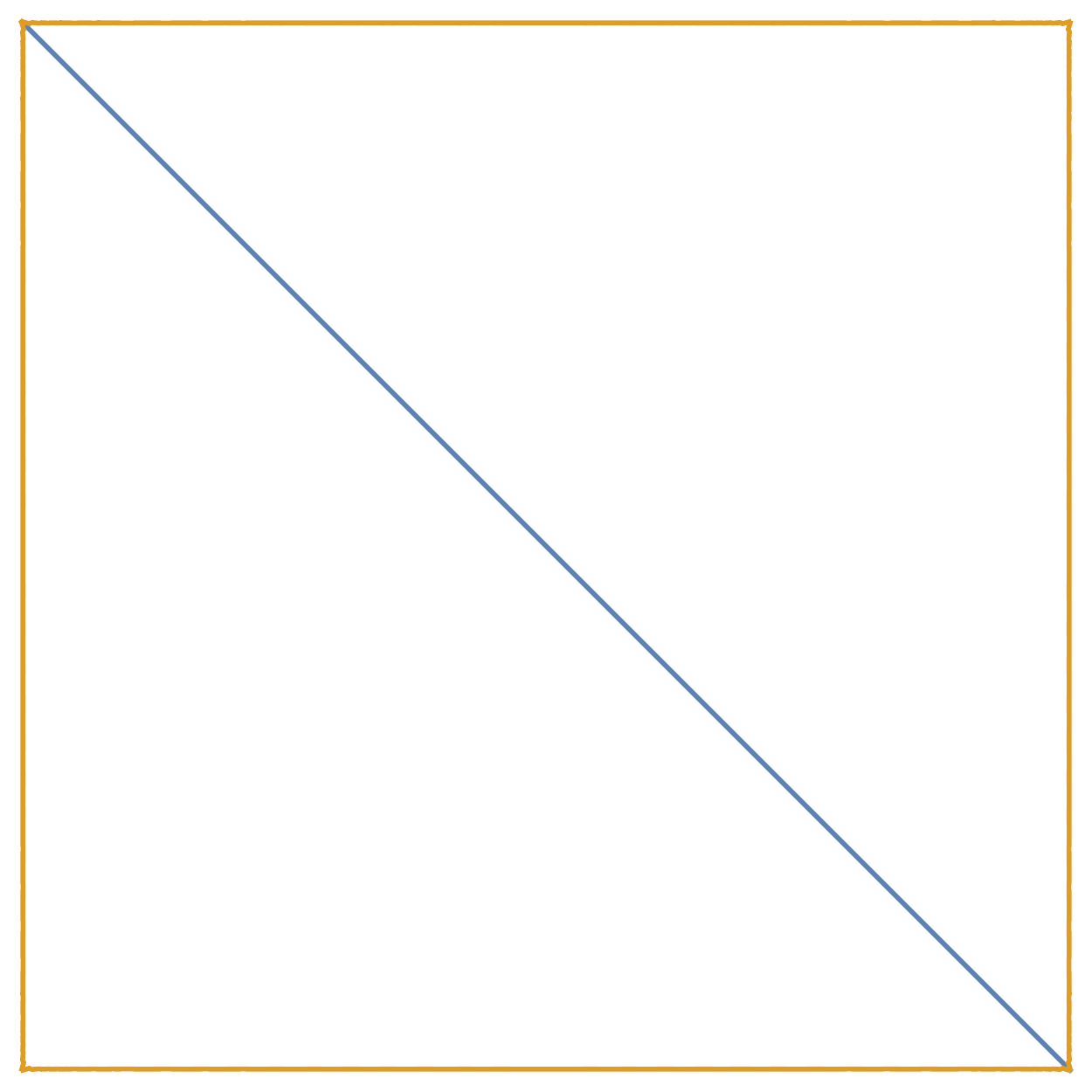}
	\end{subfigure}
	\caption{Nodal lines of four eigenfunctions coincide with the four reflection symmetry axii.}
	\label{fig:1}
\end{figure}
\vskip10pt
From Corollary \ref{cor:cubo}, the number of bifurcation branches is $\frac{3^2-1}{2}=4$. Thus we conclude that all bifurcation branches arise from symmetries, as observed in \cite{DelPinoGarciaMusso}. 
This equivalence between bifurcations and symmetries does not hold in general, as one can see in the following examples concerning the bifurcation problem for $\sigma=2$:
\begin{enumerate}
	\item (Higher dimension) For the second eigenvalue of the cube, Corollary \ref{cor:cubo} implies the existence of exactly thirteen bifurcation branches, which one may separate into three groups, depending on whether $\alpha_2^0$ and $\alpha_3^0$ are zero or not. The nodal sets are plotted in Figure \ref{fig:2}.	It is clear that the last four branches do not arise from any kind of reflection symmetry.
	\begin{figure}[h]
		\centering
		\begin{subfigure}{.3\textwidth}
			\centering
			\includegraphics[width=.8\linewidth]{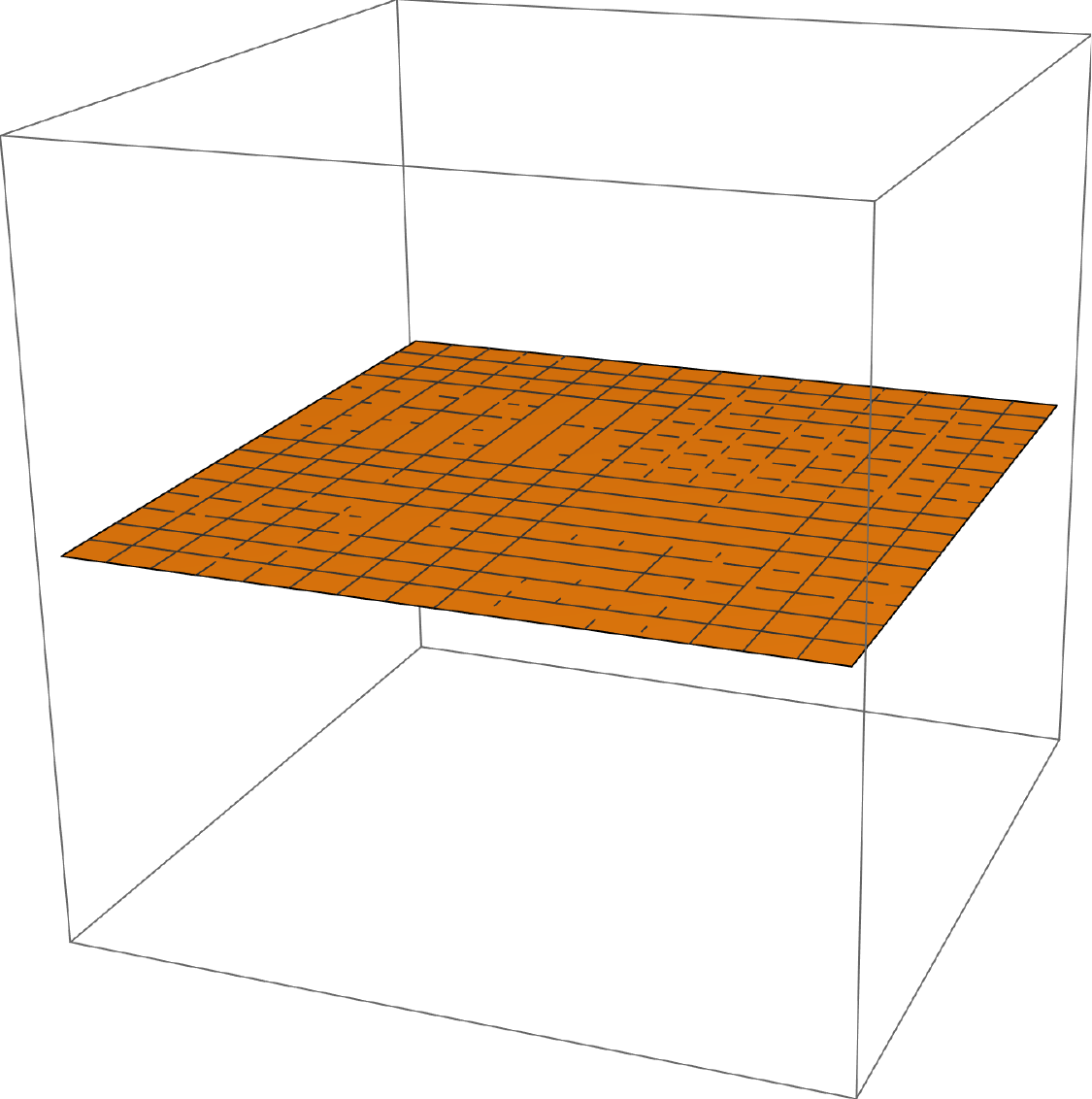}
			\caption{ \tabular[t]{@{}l@{}}Two $\alpha$'s equal to 0\\-  3 branches\endtabular}
			
		\end{subfigure}%
		\begin{subfigure}{.3\textwidth}
			\centering
			\includegraphics[width=.8\linewidth]{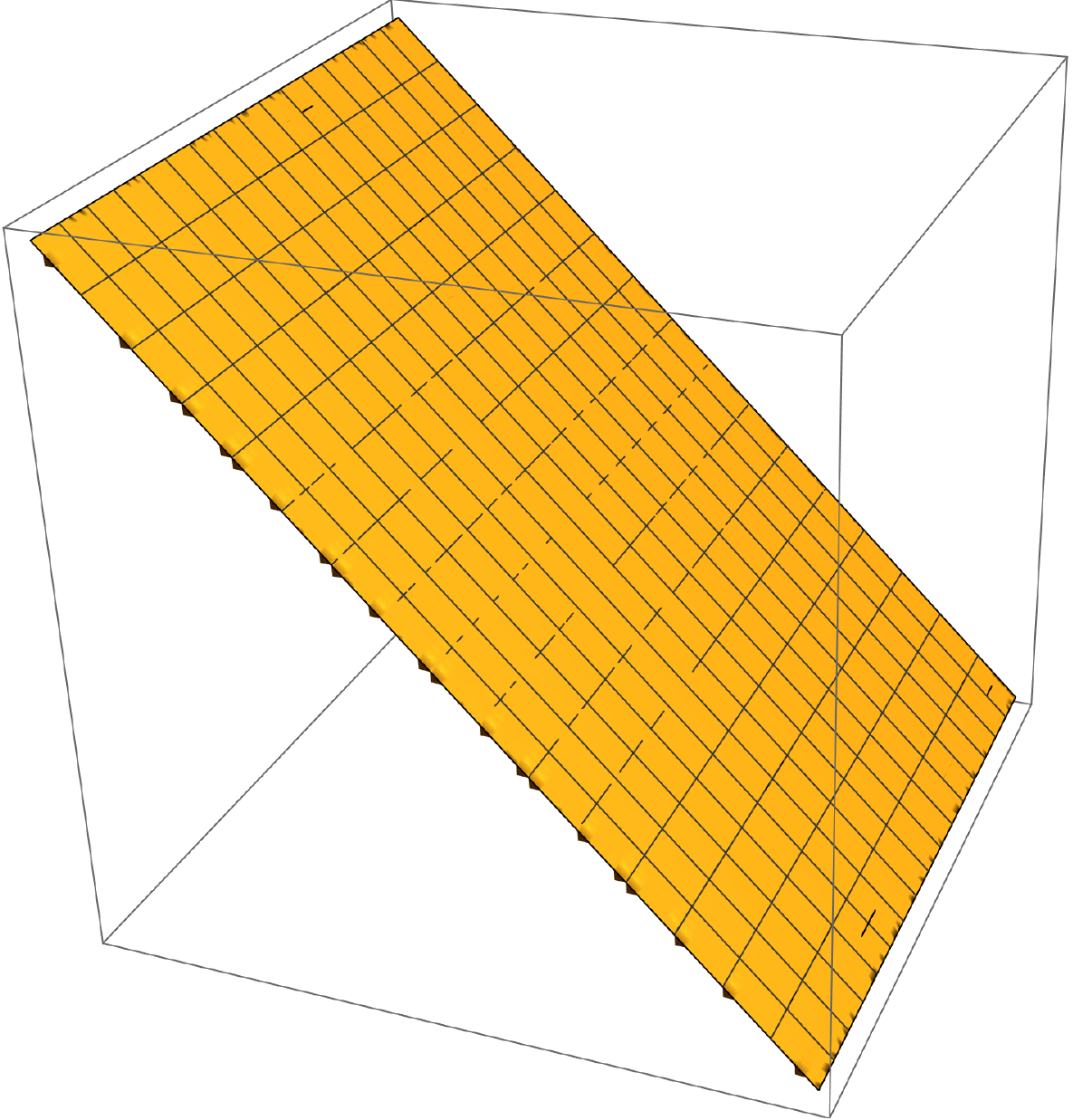}
			\caption{ \tabular[t]{@{}l@{}}One $\alpha$ equal to 0\\-  6 branches\endtabular}
		\end{subfigure}
		\begin{subfigure}{.3\textwidth}
			\centering
			\includegraphics[width=.8\linewidth]{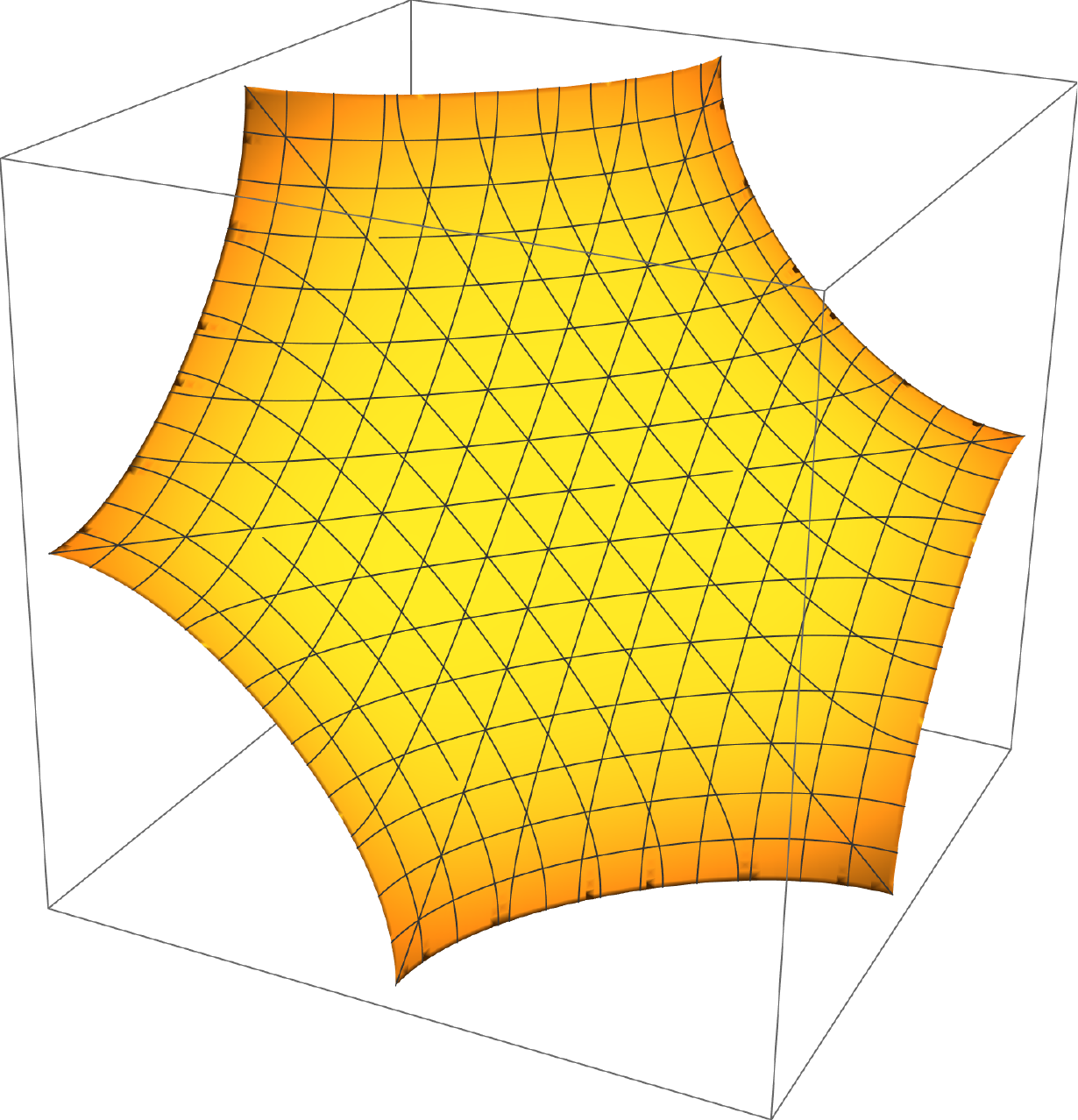}
			\caption{ \tabular[t]{@{}l@{}}Both $\alpha$'s nonzero\\-  4 branches\endtabular}
		\end{subfigure}
		\caption{The three different types of bifurcations for the second eigenvalue in the cube.}
		\label{fig:2}
	\end{figure}
	\item (Higher eigenvalue) In the square $(0,1)^2$, the second multiple eigenvalue is the third one, with eigenspace generated by
	$$
	u_1(x,y)=\sin(\pi x)\sin(3\pi y),\quad u_2(x,y)=\sin(3\pi x)\sin(\pi y),
	$$
	The corresponding nodal lines are two straight vertical (or horizontal) lines, dividing the square into three congruent rectangles. The symmetry argument still applies here: one bifurcates on a single rectangle and then performs an odd extension to the whole square. However, this only accounts for two branches. For $u_1-u_2$, one sees the nodal lines are the two diagonal lines and thus the symmetry argument can still be applied (first bifurcate on one triangle and then extend to the square). The remaining branch corresponds to $u_1+u_2$, whose nodal line is represented in Figure 3.
	\begin{figure}[h]
		\centering
		\begin{subfigure}{.3\textwidth}
			\centering
			\includegraphics[width=.4\linewidth]{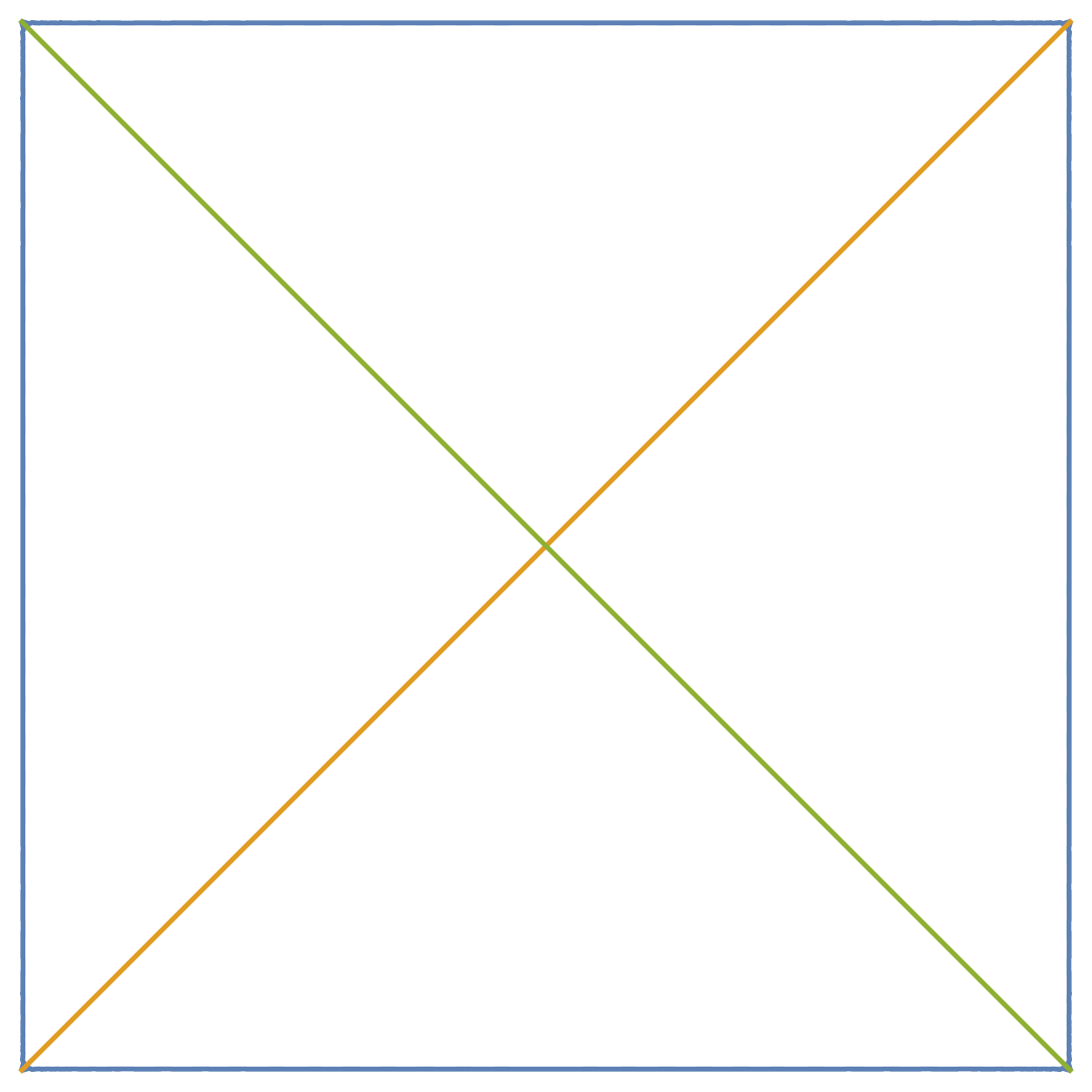}
			\caption{Nodal lines of $u_1-u_2$.}
		\end{subfigure}
		\begin{subfigure}{.3\textwidth}
			\centering
			\includegraphics[width=.4\linewidth]{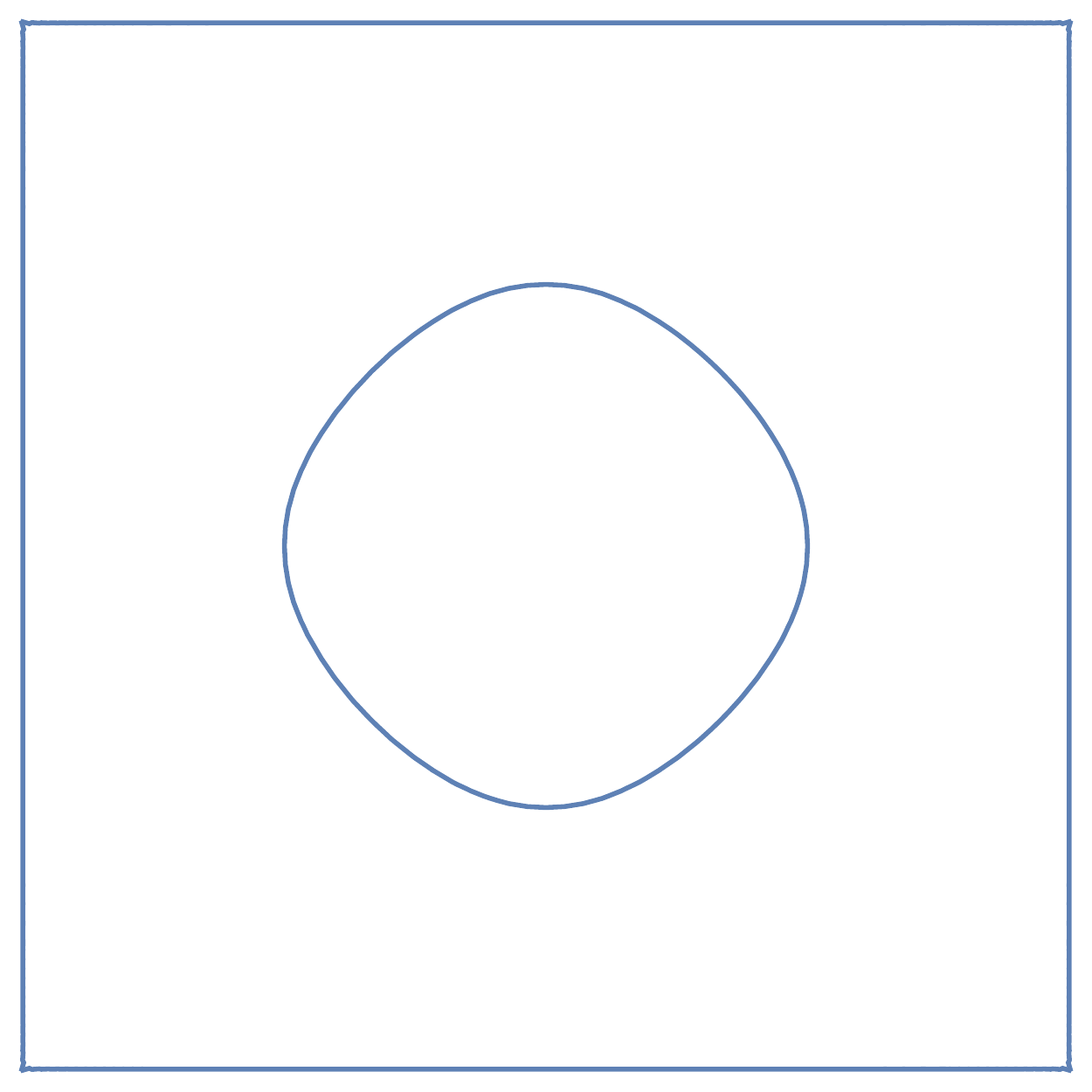}
			\caption{Nodal lines of $u_1+u_2$.}
		\end{subfigure}
		\caption{Bifurcations from the third eigenvalue of the square.}
	\end{figure}
	\vskip15pt
	\item (Another two-dimensional domain) In the rectangle $[0,1]\times[0,\sqrt{3/5}]$, the first double eigenvalue is the third one. The eigenspace is generated by
	$$
	u_1(x,y)=\sin(3\pi x)\sin(\sqrt{5}y/3),\quad u_2(x,y)=\sin(2\pi x)\sin(2\sqrt{5}y/3)
	$$
	The four bifurcation branches correspond to the four eigenvectors $u_1, u_2$ and $u_1\pm u_2$, whose contour plot can be found in Figure \ref{fig:3}.
	\begin{figure}[h]
		\centering
		\begin{subfigure}{.3\textwidth}
			\centering
			\includegraphics[width=.8\linewidth]{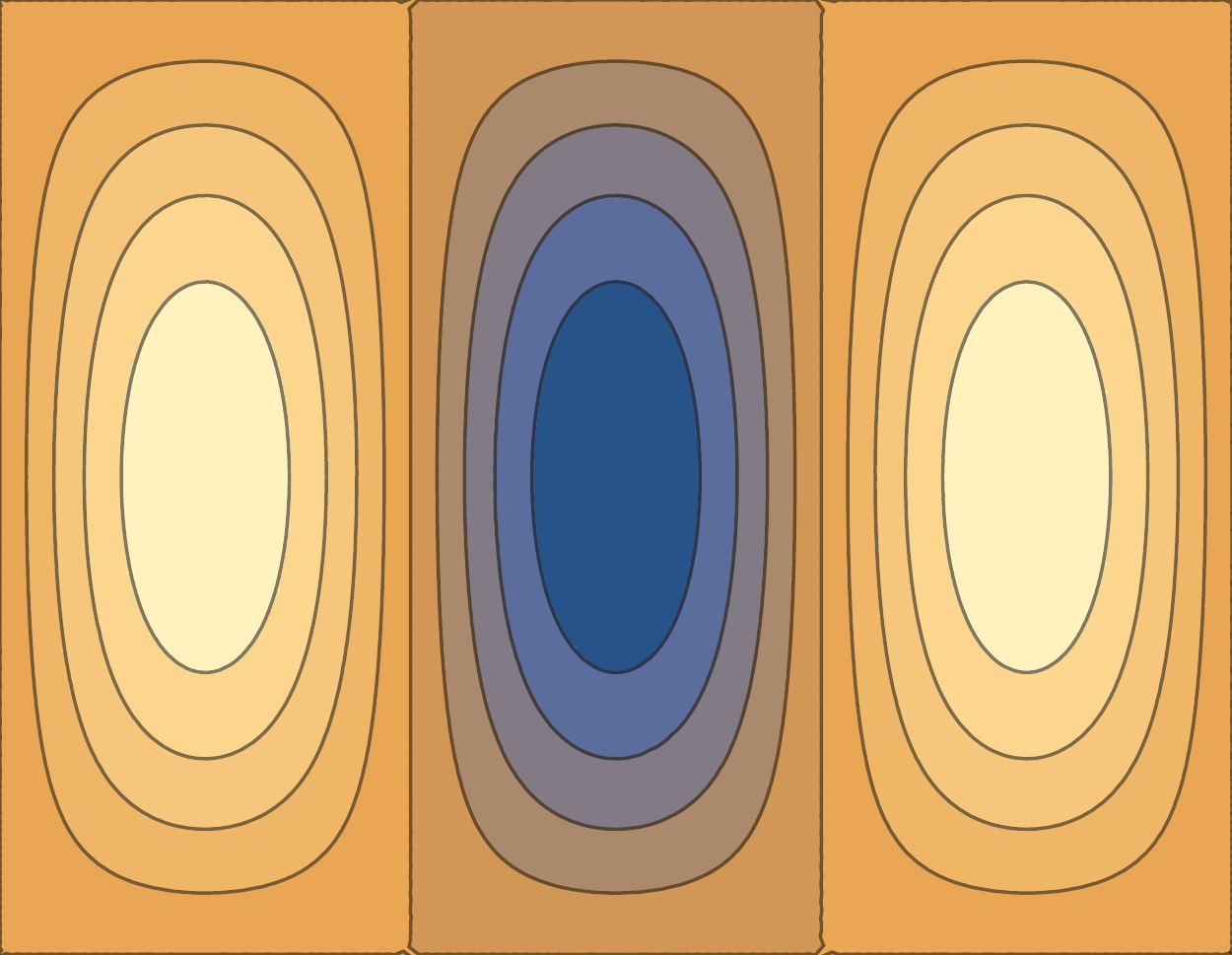}
			
		\end{subfigure}%
		\begin{subfigure}{.3\textwidth}
			\centering
			\includegraphics[width=.8\linewidth]{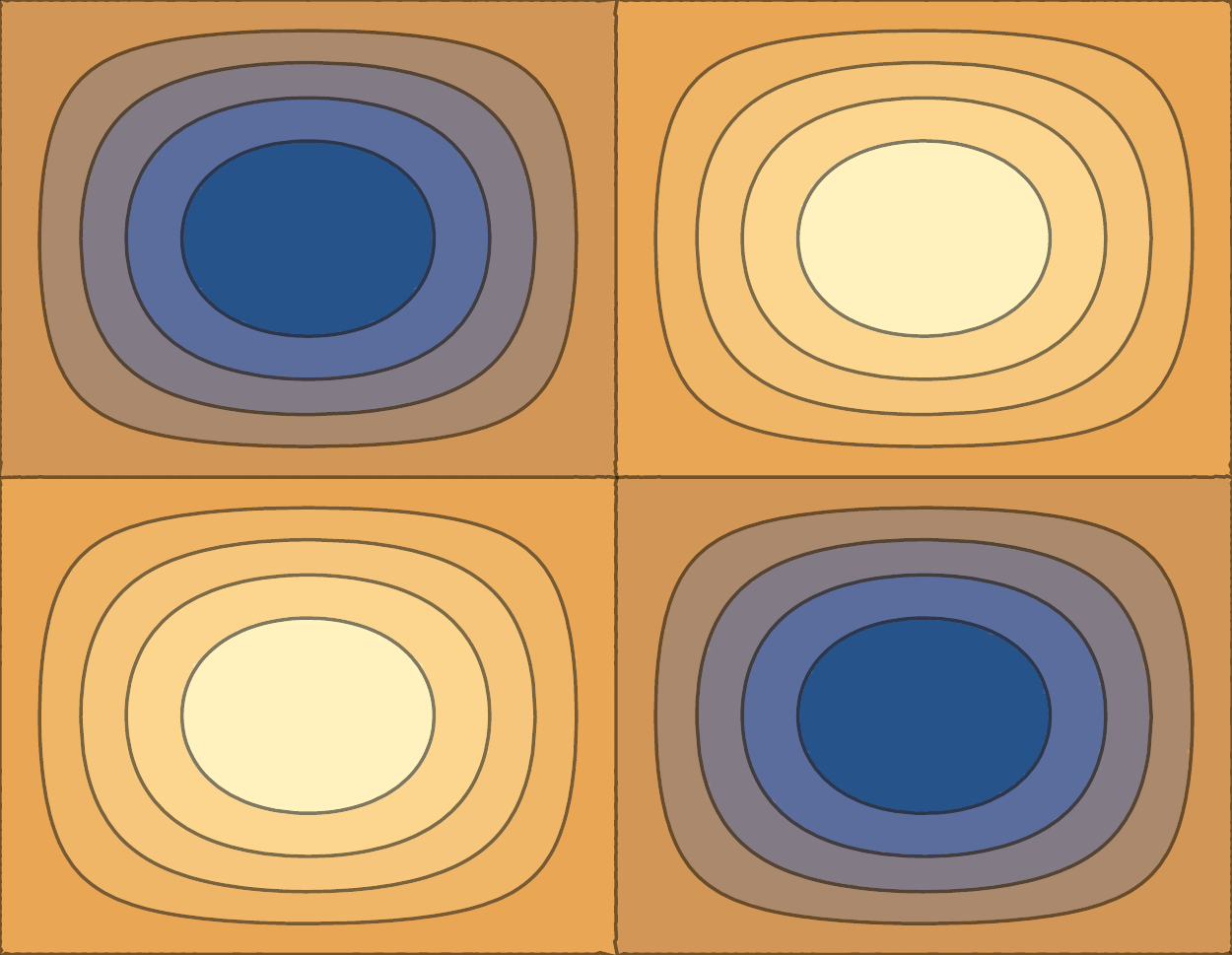}
			
		\end{subfigure}
		\vskip15pt
		\begin{subfigure}{.3\textwidth}
			\centering
			\includegraphics[width=.8\linewidth]{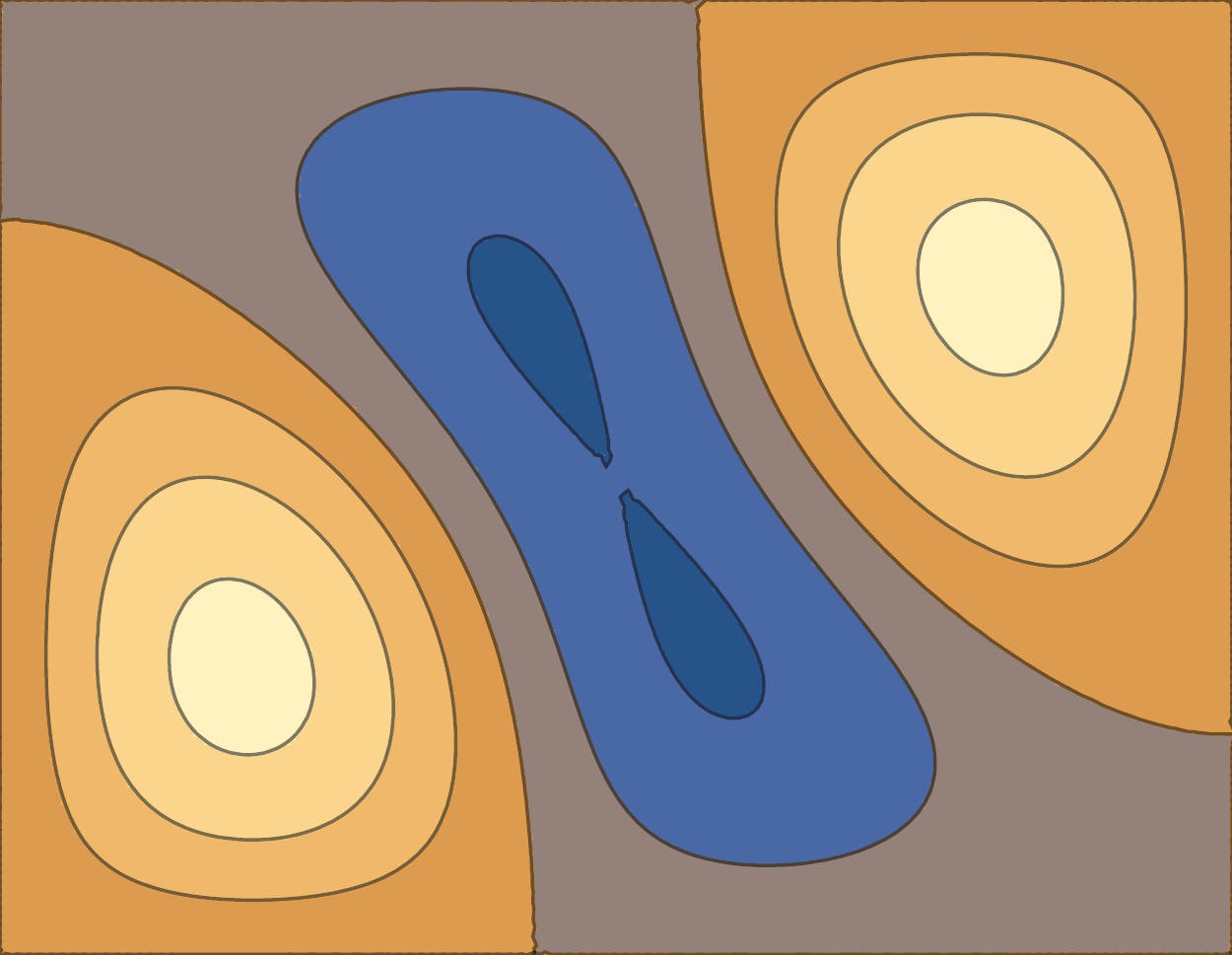}
			
		\end{subfigure}%
		\begin{subfigure}{.3\textwidth}
			\centering
			\includegraphics[width=.8\linewidth]{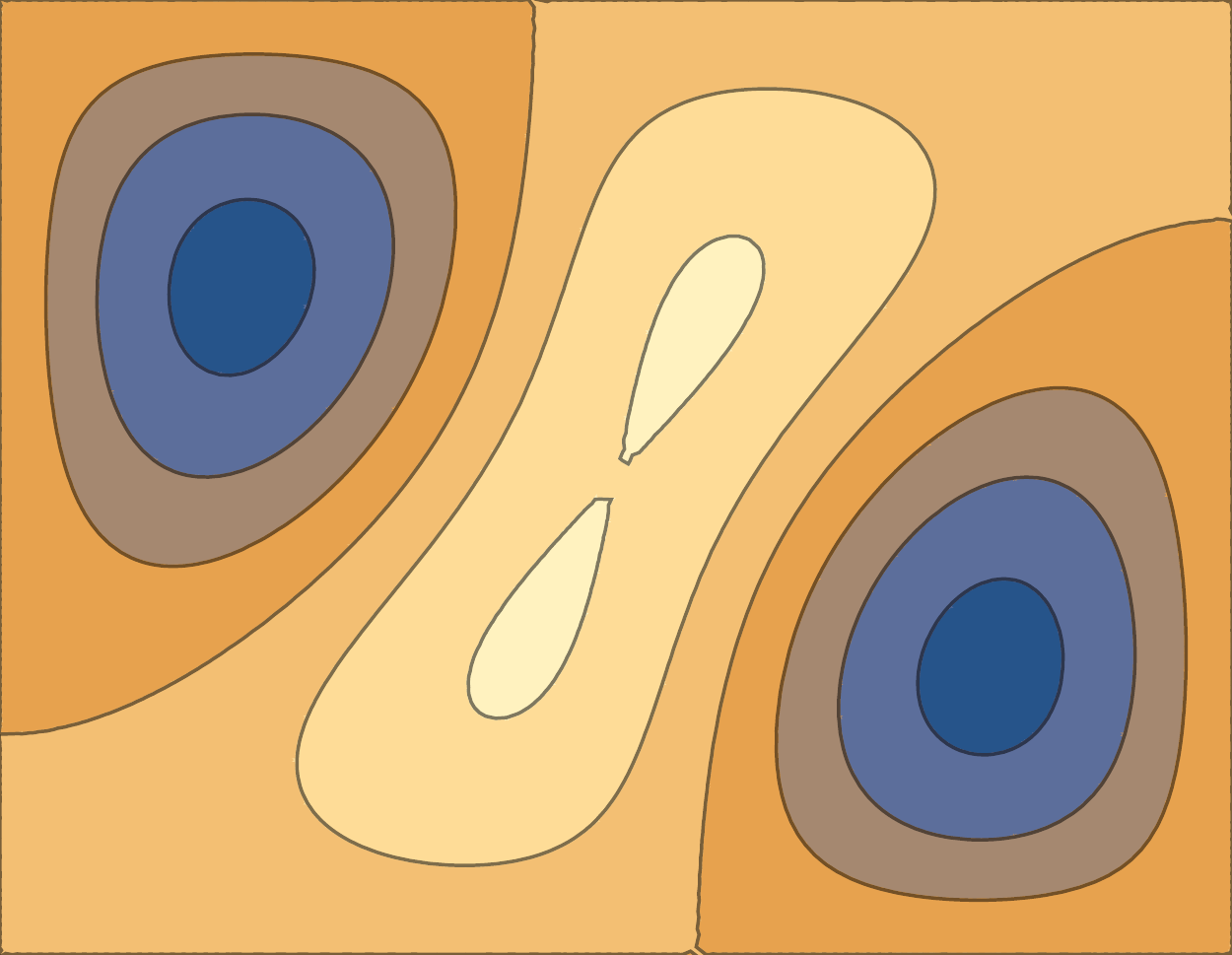}
			
		\end{subfigure}
		\caption{Contour plot of $u_1, u_2$ (top) and $u_1\pm u_2$ (bottom). The symmetry argument can only be applied to the first two cases.} 
		\label{fig:3}
	\end{figure}
\end{enumerate}
\vskip15pt\noindent \textbf{Acknowledgements.} The authors are indebted to Hugo Tavares, for many fruitful discussions in this topic, and also to the anonymous referee, whose comments led to a substantial improvement of the manuscript. S.C was partially supported by Fundação para a Ciência e Tecnologia, through the grant UID/MAT/04459/2019 and the project NoDES (PTDC/MAT-PUR/1788/2020). M.F. was partially supported by Fundação para a Ciência e Tecnologia, through the grant UIDB/04561/2020.
\bibliography{Biblioteca.bib}
\bibliographystyle{plain}

 \bigskip
 \bigskip
 
 \normalsize
 
 \begin{center}
	{\scshape Sim\~ao Correia}\\
{\footnotesize
	Center for Mathematical Analysis, Geometry and Dynamical Systems,\\
	Department of Mathematics,\\
	Instituto Superior T\'ecnico, Universidade de Lisboa\\
	Av. Rovisco Pais, 1049-001 Lisboa, Portugal\\
	simao.f.correia@tecnico.ulisboa.pt
}
  \bigskip
 
 	 	{\scshape M\'ario Figueira}\\
 	{\footnotesize
 		CMAF-CIO, Universidade de Lisboa\\
Edif\'{\i}cio C6, Campo Grande\\
1749-016 Lisboa, Portugal\\
{msfigueira@fc.ul.pt}
}

 \end{center} 
 
 \end{document}